\documentclass{article}




\title{Robust finite element discretization and solvers for 
  distributed elliptic optimal control problems}

\author{Ulrich~Langer\footnote{Institute of Computational Mathematics, 
    Johannes Kepler University Linz, Altenberger Stra{\ss}e 69, 4040 Linz, Austria, 
    Email: ulanger@numa.uni-linz.ac.at}, 
  \;
  Richard~L\"oscher\footnote{Institut f\"{u}r Angewandte Mathematik,
    Technische Universit\"{a}t Graz, Steyrergasse 30, 8010 Graz, Austria,
    Email: loescher@math.tugraz.at}, 
  \; Olaf~Steinbach\footnote{Institut f\"{u}r Angewandte Mathematik,
    Technische Universit\"{a}t Graz, Steyrergasse 30, 8010 Graz, Austria,
    Email: o.steinbach@tugraz.at}, 
  \; Huidong~Yang\footnote{Computational Science Center, Universit\"at
    Wien, Oskar--Morgenstern--Platz 1, 1090 Wien, Austria,
    Email: huidong.yang@univie.ac.at}}  


\date{\today}


\usepackage{a4}
\usepackage{paralist}
\usepackage{multirow}

\usepackage{amsmath}
\usepackage{amsthm}
\usepackage{amsfonts}
\usepackage{booktabs}
\usepackage{graphicx}
\usepackage{diagbox}

\usepackage{multirow}
\usepackage{hyperref}

\usepackage[ulem=normalem,draft]{changes}




\newtheorem{theorem}{Theorem}
\newtheorem{lemma}{Lemma}
\newtheorem{cor}{Corollary}


\setlength{\voffset}{-10mm}

\numberwithin{equation}{section} 


\begin{document}

\maketitle

\begin{abstract}
We consider standard tracking-type, distributed elliptic optimal control 
problems with $L^2$ regularization, and their finite element discretization.
We are investigating the $L^2$ error between the finite element approximation 
$u_{\varrho h}$ of the state $u_\varrho$ and the desired state (target)
$\overline{u}$ in terms of the regularization parameter $\varrho$ and the
mesh size $h$ that leads to the optimal choice $\varrho = h^4$. 
It turns out that, for this choice of the regularization parameter,
we can devise simple Jacobi-like preconditioned MINRES or Bramble-Pasciak CG 
methods that allow us to solve the reduced discrete optimality system 
in asymptotically optimal complexity with respect to the arithmetical
operations and memory demand. The theoretical results are confirmed 
by several benchmark problems with targets of various regularities
including discontinuous targets.
\end{abstract} 

\begin{keywords}
  Elliptic optimal control problems, $L^2$ regularization,
  finite element discretization,
  robust error estimates, robust solvers.
\end{keywords}

\begin{msc}
49J20,  
49M05,  
35J05,  
65M60,  
65M15,  
65N22   
\end{msc}

%
%
\section{Introduction}
\label{sec:Introduction}
Let us consider the optimal control problem: Find the optimal state
$u_\varrho$ and the optimal control $z_\varrho$ such that the cost functional 
\begin{equation}\label{minimization problem}
  {\mathcal{J}}(u_\varrho,z_\varrho) =
  \frac{1}{2} \int_\Omega [u_\varrho(x)-\overline{u}(x)]^2 \, dx +
  \frac{\varrho}{2} \int_\Omega [z_\varrho(x)]^2 \, dx
\end{equation}
is minimized subject to the elliptic boundary value problem
\begin{equation}\label{primal problem}
  - \Delta u_\varrho = z_\varrho \quad \mbox{in} \; \Omega,
  \quad \mbox{and} \quad 
  u_\varrho = 0 \quad \mbox{on} \; \partial\Omega,
\end{equation}
where $\varrho > 0$ denotes the regularization parameter, $\overline{u}$ is
the given desired state that is nothing but the target which we want to reach,
and $\Omega \subset \mathbb{R}^d$ ($d=1,2,3$) denotes the computational
domain that is assumed to be bounded with Lipschitz boundary
$\partial \Omega$.  
This optimal control problem has a unique solution 
$u_\varrho \in H_0^1(\Omega)$ and $z_\varrho \in L^2(\Omega)$,
where we use the usual notation for Sobolev and Lebesgue spaces; 
see, e.g., \cite{Lions:1971, Troeltzsch:2010}. This solution can be
determined by solving the optimality system consisting of the state
(primal) problem \eqref{primal problem}, the gradient equation
\begin{equation}\label{gradient equation}
p_\varrho + \varrho z_\varrho = 0 \quad \mbox{in} \; \Omega,
\end{equation}
and the adjoint problem
\begin{equation}\label{adjoint problem}
  - \Delta p_\varrho = u_\varrho - \overline{u} \quad \mbox{in} \; \Omega, \quad
  p_\varrho = 0 \quad \mbox{on} \; \partial \Omega
\end{equation}
for determining the co-state (adjoint) $p_\varrho$.
The control $z_\varrho$ can be eliminated by means of the gradient equation
\eqref{gradient equation} that results in the so-called reduced optimality
system which is the starting point for our analysis in
Section~\ref{sec:RegularizationErrorEstimates}.
At that point we mention that there are many publications on elliptic optimal 
control problems such as \eqref{minimization problem}--\eqref{primal problem}, 
their numerical solution, and their application to practical problems, 
where often additional constraints, e.g., box constraints imposed on
the control, are added. We refer to the monographs
\cite{Lions:1971, Troeltzsch:2010} for more detailed information on
optimal control problems of that kind.

In this paper, we are interested to derive regularization
  error estimates of the form
$\|u_\varrho - \overline{u}\|_{L^2(\Omega)}$ and 
$\|u_{\varrho h} - \overline{u}\|_{L^2(\Omega)}$ in terms of the relaxation
parameter $\varrho$ and the finite element mesh size $h$. Optimal
control problems without state or control constraints correspond
to the solution of inverse problems with Tikhonov regularization in a
Hilbert scale; see \cite{Natterer:1984} for related regularization
error estimates. In \cite{LLSY:NeumuellerSteinbach:2021a} we have
given regularization error estimates for
$\| u_\varrho - \overline{u} \|_{L^2(\Omega)}$ when considering both the
regularization in $L^2(\Omega)$, and in the energy space $H^{-1}(\Omega)$.
The order of the convergence rate in the relaxation parameter $\varrho$
does not only depend on the regularity of the given target, but also
on the choice of the regularization. In the case of the energy
regularization, 
we have
analyzed the finite element discretization of the reduced optimality
system
in \cite{LLSY:LangerSteinbachYang:2022a}.
When combining both error estimates, this results in the
choice $\varrho = h^2$ to ensure optimal convergence of the approximate
state $u_{\varrho h}$ to the target $\overline{u}$. This optimal choice
of the regularization parameter also allows the construction of
preconditioned iterative solution methods which are robust with respect
to the regularization parameter $\varrho=h^2$, and the finite element
mesh size $h$. In this work we will derive related regularization
and finite element error estimates in the case of the $L^2$ regularization
which will result in the optimal choice $\varrho=h^4$.
This optimal choice of $\varrho$ allows us to construct robust and,
at the same time, very efficient iterative solvers based on diagonal,
i.e. Jacobi-like preconditioners. The discretization of the reduced
optimality system leads to large-scale systems of finite element
equations with symmetric, but indefinite system matrices
for determining the nodal vectors for the finite element approximations 
to the optimal state $u_\varrho$ and optimal co-state (adjoint) $p_\varrho$.
Iterative solvers for such kind of saddle point systems are extensively
studied in the literature. We refer the reader to the survey paper
\cite{LLSY:BenziGolubLiesen:2005a}, the monograph \cite{Elman2005}, 
and the more recent papers \cite{LLSY:Notay:2014a} and
\cite{LLSY:Notay:2019a} for a  comprehensive overview on saddle point
solvers. Using an operator interpolation technique, Zulehner proposed a block-diagonal preconditioner 
for the symmetric and indefinite discrete reduced optimality system that 
is robust with respect to the regularization parameter $\varrho$
\cite{LSTY:Zulehner:2011a}. The diagonal blocks of the preconditioner 
are of the form $\mathbf{M}_h + \varrho^{1/2} \mathbf{K}_h$ and
$\varrho^{-1} (\mathbf{M}_h + \varrho^{1/2} \mathbf{K})$, where
$\mathbf{M}_h$ and $\mathbf{K}_h$ denote the mass and the stiffness
matrices, respectively. Replacing now $\mathbf{M}_h + \varrho^{1/2}
\mathbf{K}_h$ by some $\varrho$ - robust multigrid or multilevel preconditioner
as proposed in \cite{LLSY:OlahanskiiReusken:2000a}
and \cite{LLST:KrausWolfmayr:2014a}, this
finally leads to a robust and efficient preconditioner for  
the MINRES solver \cite{LLSY:PaigeSaunders:1975a}.
Surprisingly, for the optimal choice $\varrho=h^4$ of the
regularization parameter, the matrix
$\mathbf{M}_h + \varrho^{1/2} \mathbf{K}_h$ is spectrally equivalent
to the mass matrix $\mathbf{M}_h$, and, therefore, well-conditioned.
Now, replacing $\mathbf{M}_h + \varrho^{1/2} \mathbf{K}_h$ by some diagonal 
approximation of the mass matrix, we get a robust and really very cheap
preconditioner for MINRES. The same observation lead to robust and 
asymptotically optimal preconditioners for the
Bramble-Pasciak CG \cite{LLSY:BramblePasciak:1988a}.

The remainder of the paper is organized as follows. 
In Section~\ref{sec:RegularizationErrorEstimates},
we derive the $L^2$ error estimate between the 
exact state solution $u_\varrho$ of the optimal control problem
for fixed $\varrho$ and the desired state $\overline{u}$ 
in terms of the regularization parameter $\varrho$,
whereas, in Section~\ref{sec:FiniteElementErrorEstimates},
the same estimates are derived for a finite element approximation
$u_{\varrho h}$ to $\overline{u}$.
Section~\ref{sec:RobustSolvers} is devoted to the construction 
and analysis of fast and robust iterative solvers.
Section~\ref{sec:NumericalResults} presents and discusses 
numerical results for typical benchmark problems.
Finally, in Section~\ref{sec:ConclusionsOutlook}, 
we draw some conclusions, and give an outlook on 
some future research topics.

%
%

\section{Regularization error estimates}
\label{sec:RegularizationErrorEstimates}
When using the gradient equation \eqref{gradient equation} to eliminate the
control $z_\varrho$, the variational formulation of the primal Dirichlet
problem \eqref{primal problem} is to find $u_\varrho \in H^1_0(\Omega)$ such that
\begin{equation}\label{VF primal}
  \frac{1}{\varrho} \langle p_\varrho , v \rangle_{L^2(\Omega)} +
  \langle \nabla u_\varrho , \nabla v \rangle_{L^2(\Omega)} = 0 \quad
  \mbox{for all} \; v \in H^1_0(\Omega),
\end{equation}
while the variational formulation of the adjoint problem
\eqref{adjoint problem} is to find $p_\varrho \in H^1_0(\Omega)$ such that
\begin{equation}\label{VF adjoint}
  \langle \nabla p_\varrho , \nabla q \rangle_{L^2(Q)} =
  \langle u_\varrho - \overline{u} , q \rangle_{L^2(Q)} \quad \mbox{for all} \;
  q \in H^1_0(\Omega) .
\end{equation}
While unique solvability of the coupled variational formulation
\eqref{VF primal} and \eqref{VF adjoint} is well established, our
particular interest is in estimating the regularization error
$\| u_\varrho - \overline{u} \|_{L^2(\Omega)}$ which depends 
on the regularity of the target $\overline{u}$. As already discussed in
\cite{LLSY:NeumuellerSteinbach:2021a}, we can prove the following result:

\begin{lemma}\label{Lemma Regularization L2}
  Let $(u_\varrho,p_\varrho) \in H^1_0(\Omega) \times H^1_0(\Omega)$ be the
  unique solution of the coupled variational formulation
  \eqref{VF primal} and \eqref{VF adjoint}. When assuming
  $\overline{u} \in L^2(\Omega)$ only, this gives
  \begin{equation}\label{regularization L2 L2}
    \| u_\varrho - \overline{u} \|_{L^2(\Omega)}
    \leq \|\overline{u} \|_{L^2(\Omega)} .
  \end{equation}
  For $\overline{u} \in H^1_0(\Omega)$ there holds
  \begin{equation}\label{regularization L2 H1}
    \| u_\varrho - \overline{u} \|_{L^2(\Omega)} \leq \varrho^{1/4} \,
    \| \nabla \overline{u} \|_{L^2(\Omega)} .
  \end{equation}
  Moreover, if $\overline{u} \in H^1_0(\Omega)$ satisfies
  $\Delta \overline{u} \in L^2(\Omega)$, then
  \begin{equation}\label{regularization L2 H2}
    \| u_\varrho - \overline{u} \|_{L^2(\Omega)} \leq\varrho^{1/2} \,
    \| \Delta \overline{u} \|_{L^2(\Omega)} .
  \end{equation}
\end{lemma}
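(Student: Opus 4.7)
The plan is to reduce the coupled optimality system \eqref{VF primal}--\eqref{VF adjoint} to a single variational problem for $u_\varrho$ alone and then exploit the resulting best-approximation characterization. Testing \eqref{VF primal} identifies $p_\varrho = \varrho\Delta u_\varrho$ in $L^2(\Omega)$ (which is consistent since $p_\varrho\in H^1_0(\Omega)\subset L^2(\Omega)$), and plugging this into \eqref{VF adjoint} and integrating by parts once more yields the fourth-order variational equation
\begin{equation*}
\varrho\,\langle\Delta u_\varrho,\Delta\phi\rangle_{L^2(\Omega)}+\langle u_\varrho,\phi\rangle_{L^2(\Omega)}=\langle\overline{u},\phi\rangle_{L^2(\Omega)}\qquad\text{for all }\phi\in V,
\end{equation*}
where $V:=\{v\in H^1_0(\Omega):\Delta v\in L^2(\Omega)\}$. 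This is the Euler--Lagrange equation of the reduced functional $\mathcal{F}(u):=\tfrac12\|u-\overline{u}\|_{L^2(\Omega)}^2+\tfrac{\varrho}{2}\|\Delta u\|_{L^2(\Omega)}^2$ on $V$, so that $u_\varrho$ is its unique minimizer and $\mathcal{F}(u_\varrho)\leq\mathcal{F}(v)$ for every competitor $v\in V$.

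With this characterization, \eqref{regularization L2 L2} and \eqref{regularization L2 H2} fall out immediately by suitable choices of $v$: taking $v=0\in V$ gives $\|u_\varrho-\overline{u}\|_{L^2(\Omega)}^2\leq\|\overline{u}\|_{L^2(\Omega)}^2$ and hence \eqref{regularization L2 L2}, while under the additional hypothesis $\Delta\overline{u}\in L^2(\Omega)$ the target itself lies in $V$ and $v=\overline{u}$ yields $\|u_\varrho-\overline{u}\|_{L^2(\Omega)}^2\leq\varrho\|\Delta\overline{u}\|_{L^2(\Omega)}^2$, i.e.\ \eqref{regularization L2 H2}. The intermediate estimate \eqref{regularization L2 H1} is the only non-trivial one, since for $\overline{u}\in H^1_0(\Omega)$ only the target $\overline{u}$ need not lie in $V$.

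To handle \eqref{regularization L2 H1} I would smooth $\overline{u}$ into $V$ by a Helmholtz regularization: for a free parameter $\delta>0$, let $w_\delta\in H^1_0(\Omega)$ be the weak solution of $-\delta\Delta w_\delta+w_\delta=\overline{u}$ in $\Omega$, which is well posed by Lax--Milgram. Rearranging the equation gives $\Delta w_\delta=\delta^{-1}(w_\delta-\overline{u})\in L^2(\Omega)$, hence $w_\delta\in V$. Testing the Helmholtz equation with $\overline{u}-w_\delta\in H^1_0(\Omega)$, integrating by parts, and applying Young's inequality to the cross-term $\delta\langle\nabla w_\delta,\nabla\overline{u}\rangle_{L^2(\Omega)}$ delivers the two companion bounds
\begin{equation*}
\|w_\delta-\overline{u}\|_{L^2(\Omega)}^2\leq\tfrac{\delta}{2}\|\nabla\overline{u}\|_{L^2(\Omega)}^2,\qquad \|\Delta w_\delta\|_{L^2(\Omega)}^2\leq\tfrac{1}{2\delta}\|\nabla\overline{u}\|_{L^2(\Omega)}^2.
\end{equation*}
Inserting $v=w_\delta$ into $\mathcal{F}(u_\varrho)\leq\mathcal{F}(w_\delta)$ produces the two-parameter bound $\|u_\varrho-\overline{u}\|_{L^2(\Omega)}^2\leq\tfrac12(\delta+\varrho/\delta)\|\nabla\overline{u}\|_{L^2(\Omega)}^2$, and minimizing the right-hand side over $\delta>0$ by AM--GM (minimum at $\delta=\varrho^{1/2}$) delivers \eqref{regularization L2 H1}. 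The main obstacle is precisely this step: the competitor $w_\delta$ has to be balanced so that the split between $\|w_\delta-\overline{u}\|_{L^2(\Omega)}$ and $\|\Delta w_\delta\|_{L^2(\Omega)}$ produces the correct half-power of $\varrho$ after optimization.
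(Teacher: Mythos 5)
Your proposal is correct, and it reaches all three estimates with the same constants as the paper, but by a genuinely different route. The paper never leaves the coupled first-order optimality system: it tests \eqref{VF primal} and \eqref{VF adjoint} with judiciously chosen functions ($q=u_\varrho$ resp.\ $q=u_\varrho-\overline{u}$, $v=p_\varrho$) to obtain energy identities, and for the intermediate case \eqref{regularization L2 H1} it runs a short bootstrapping chain ($\|\nabla p_\varrho\|^2 \le \|u_\varrho-\overline{u}\|\,\|p_\varrho\|$ fed back into the identity) to first establish $\|p_\varrho\|_{L^2(\Omega)}\le \varrho^{3/4}\|\nabla\overline{u}\|_{L^2(\Omega)}$ and then conclude. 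You instead eliminate $p_\varrho=\varrho\Delta u_\varrho$ to recover the reduced minimization problem for $\mathcal{F}(u)=\tfrac12\|u-\overline{u}\|_{L^2(\Omega)}^2+\tfrac{\varrho}{2}\|\Delta u\|_{L^2(\Omega)}^2$ over $V=\{v\in H^1_0(\Omega):\Delta v\in L^2(\Omega)\}$ and compare with competitors: $v=0$ and $v=\overline{u}$ give \eqref{regularization L2 L2} and \eqref{regularization L2 H2} in one line each, and the Helmholtz mollifier $-\delta\Delta w_\delta+w_\delta=\overline{u}$ with the optimal balance $\delta=\varrho^{1/2}$ gives \eqref{regularization L2 H1}; your two companion bounds for $\|w_\delta-\overline{u}\|_{L^2(\Omega)}$ and $\|\Delta w_\delta\|_{L^2(\Omega)}$ check out. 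What your approach buys is transparency and generality: it exposes the lemma as a $K$-functional/approximation statement between $L^2(\Omega)$ and $V$, so intermediate regularities of $\overline{u}$ follow by the same device without reproving anything. What the paper's approach buys is self-containedness at the level of the weak formulation: it avoids introducing the fourth-order space $V$, the identification of $u_\varrho$ as the minimizer of $\mathcal{F}$, and the extra integration by parts $\langle\nabla p_\varrho,\nabla\phi\rangle_{L^2(\Omega)}=-\langle p_\varrho,\Delta\phi\rangle_{L^2(\Omega)}$ for $\phi\in V$ --- steps you use correctly but state somewhat tersely and which deserve a sentence of justification (convexity of $\mathcal{F}$ to pass from the Euler--Lagrange equation to minimality, and density of $C_c^\infty(\Omega)$ in $H^1_0(\Omega)$ for the integration by parts on a Lipschitz domain).
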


\begin{proof}
When considering the variational formulation \eqref{VF adjoint} for
 $q=u_\varrho$, and \eqref{VF primal} for $v=p_\varrho$ this gives
\[
  \langle u_\varrho - \overline{u} , u_\varrho \rangle_{L^2(\Omega)} =
  \langle \nabla p_\varrho , \nabla u_\varrho \rangle_{L^2(\Omega)} = -
  \frac{1}{\varrho} \, \langle p_\varrho , p_\varrho \rangle_{L^2(\Omega)},
\]
i.e., we can write
\[
  \frac{1}{\varrho} \, \| p_\varrho \|^2_{L^2(\Omega)} +
  \| u_\varrho - \overline{u} \|^2_{L^2(\Omega)} =
  \langle \overline{u} - u_\varrho , \overline{u} \rangle_{L^2(\Omega)} \leq
  \| u_\varrho - \overline{u} \|_{L^2(\Omega)} \| \overline{u} \|_{L^2(\Omega)},
\]
and therefore, \eqref{regularization L2 L2} follows.

Next we consider the case $\overline{u} \in H^1_0(\Omega)$. Then we can
use \eqref{VF adjoint} for $q=u_\varrho -\overline{u}$
and \eqref{VF primal} for $v=p_\varrho$ to obtain
\begin{eqnarray*}
  \| u_\varrho - \overline{u} \|^2_{L^2(\Omega)}
  & = & \langle u_\varrho - \overline{u} ,
        u_\varrho - \overline{u} \rangle_{L^2(\Omega)} 
        \, = \, \langle \nabla p_\varrho , \nabla (u_\varrho-\overline{u})
        \rangle_{L^2(\Omega)} \\
  && \hspace*{-2.8cm}
     = \, \langle \nabla p_\varrho , \nabla u_\varrho \rangle_{L^2(\Omega)} -
        \langle \nabla p_\varrho , \nabla \overline{u} \rangle_{L^2(\Omega)} \,
  = \, - \frac{1}{\varrho} \, \langle p_\varrho , p_\varrho \rangle_{L^2(\Omega)} -
        \langle \nabla p_\varrho , \nabla \overline{u} \rangle_{L^2(\Omega)} ,
\end{eqnarray*}
i.e.,
\begin{equation}\label{Proof L2 H1}
  \| u_\varrho - \overline{u} \|_{L^2(\Omega)}^2 +
  \frac{1}{\varrho} \, \| p_\varrho \|^2_{L^2(\Omega)} = -
  \langle \nabla p_\varrho , \nabla \overline{u} \rangle_{L^2(\Omega)} \leq
  \| \nabla p_\varrho \|_{L^2(\Omega)} \| \nabla \overline{u} \|_{L^2(\Omega)} .
\end{equation}
Now, using \eqref{VF adjoint} for $q =p_\varrho$ this gives
\[
  \| \nabla p_\varrho \|_{L^2(\Omega)}^2 =
  \langle \nabla p_\varrho , \nabla p_\varrho \rangle_{L^2(\Omega)} =
  \langle u_\varrho - \overline{u} , p_\varrho \rangle_{L^2(\Omega)} \leq
  \| u_\varrho - \overline{u} \|_{L^2(\Omega)} \| p_\varrho \|_{L^2(\Omega)},
\]
and hence,
\[
  \| u_\varrho - \overline{u} \|_{L^2(\Omega)}^2 \leq
  \| u_\varrho - \overline{u} \|_{L^2(\Omega)}^{1/2}
  \| p_\varrho \|_{L^2(\Omega)}^{1/2} \| \nabla \overline{u} \|_{L^2(\Omega)}
\]
follows, i.e.,
\begin{equation}\label{Proof L2 H1 2}
  \| u_\varrho - \overline{u} \|_{L^2(\Omega)} \leq
  \| p_\varrho \|_{L^2(\Omega)}^{1/3}
  \| \nabla \overline{u} \|^{2/3}_{L^2(\Omega)} .
\end{equation}
Moreover, \eqref{Proof L2 H1} then implies
\begin{eqnarray*}
  \| p_\varrho \|^2_{L^2(\Omega)}
  & \leq & \varrho \, \| \nabla p_\varrho \|_{L^2(\Omega)}
           \| \nabla \overline{u} \|_{L^2(\Omega)} \\
  & \leq & \varrho \, \| u_\varrho - \overline{u} \|_{L^2(\Omega)}^{1/2}
           \| p_\varrho \|_{L^2(\Omega)}^{1/2}
           \| \nabla \overline{u} \|_{L^2(\Omega)} \\
  & \leq & \varrho \, \| p_\varrho \|_{L^2(\Omega)}^{2/3}
           \| \nabla \overline{u} \|_{L^2(\Omega)}^{4/3},
\end{eqnarray*}
i.e.,
\[
  \| p_\varrho \|_{L^2(\Omega)} \leq \varrho^{3/4} \,
  \| \nabla \overline{u} \|_{L^2(\Omega)} .
\]
Now, \eqref{regularization L2 H1} follows when using
\eqref{Proof L2 H1 2}.

Finally we assume $\overline{u} \in H^1_0(\Omega)$ satisfying
$\Delta \overline{u} \in L^2(\Omega)$. As in the derivation of
\eqref{Proof L2 H1} we have
\[
 \| u_\varrho - \overline{u} \|^2_{L^2(\Omega)}
 = \langle \nabla p_\varrho , \nabla u_\varrho \rangle_{L^2(\Omega)} -
 \langle \nabla p_\varrho , \nabla \overline{u} \rangle_{L^2(\Omega)} .
\]
Now, using integration by parts, inserting
$ p_\varrho = - \varrho z_\varrho$, and using $z_\varrho = -\Delta u_\varrho$,
this gives
\begin{eqnarray*}
  \| u_\varrho - \overline{u} \|^2_{L^2(\Omega)}
  & = & - \langle p_\varrho , \Delta u_\varrho \rangle_{L^2(\Omega)} +
        \langle p_\varrho , \Delta \overline{u} \rangle_{L^2(\Omega)} \\
  & = & \varrho \, \langle z_\varrho , \Delta u_\varrho \rangle_{L^2(\Omega)} -
        \varrho \, \langle z_\varrho , \Delta \overline{u} \rangle_{L^2(\Omega)} \\
  & = & - \varrho \, \langle \Delta u_\varrho ,
        \Delta u_\varrho \rangle_{L^2(\Omega)} +
        \varrho \, \langle \Delta u_\varrho ,
        \Delta \overline{u} \rangle_{L^2(\Omega)},
\end{eqnarray*}
i.e.,
\[
  \| u_\varrho - \overline{u} \|_{L^2(\Omega)}^2 + \varrho \,
  \| \Delta u_\varrho \|_{L^2(\Omega)}^2 =
  \varrho \, \langle \Delta u_\varrho , \Delta
  \overline{u} \rangle_{L^2(\Omega)} \leq
  \varrho \, \| \Delta u_\varrho \|_{L^2(\Omega)}
  \| \Delta \overline{u} \|_{L^2(\Omega)},
\]
and hence,
\[
  \| \Delta u_\varrho \|_{L^2(\Omega)} \leq \| \Delta \overline{u} \|_{L^2(\Omega)},
  \quad
  \| u_\varrho - \overline{u} \|^2_{L^2(\Omega)} \leq \varrho \,
  \| \Delta \overline{u} \|_{L^2(\Omega)}^2 ,
\]
i.e., \eqref{regularization L2 H2} follows.
\end{proof}

Note that the regularization error estimates of
Lemma \ref{Lemma Regularization L2} were already given in
\cite[Theorem 4.1]{LLSY:NeumuellerSteinbach:2021a}. But in particular the proof
of \eqref{regularization L2 H1} is a bit different to that of
\cite{LLSY:NeumuellerSteinbach:2021a}, resulting in an improved constant,
and \eqref{regularization L2 H2} is new. In addition to regularization
error estimates in $L^2(\Omega)$ we also need to have related estimates
in $H^1_0(\Omega)$ when assuming $\overline{u} \in H^1_0(\Omega)$.

\begin{lemma}
  Let $(u_\varrho,p_\varrho) \in H^1_0(\Omega) \times H^1_0(\Omega)$ be the
  unique solution of the coupled variational formulation
  \eqref{VF primal} and \eqref{VF adjoint}. When assuming
  $\overline{u} \in H^1_0(\Omega)$, this gives
  \begin{equation}\label{regularization H1 H1}
    \| \nabla (u_\varrho - \overline{u}) \|_{L^2(\Omega)} \leq
    \| \nabla \overline{u} \|_{L^2(\Omega)} .
  \end{equation}
  Moreover, if $\overline{u} \in H^1_0(\Omega)$ satisfies
  $\Delta \overline{u} \in L^2(\Omega)$, then
  \begin{equation}\label{regularization H1 H2}
    \| \nabla (u_\varrho - \overline{u} ) \|_{L^2(\Omega)} \leq
    \varrho^{1/4} \, \| \Delta \overline{u} \|_{L^2(\Omega)} .
  \end{equation}
\end{lemma}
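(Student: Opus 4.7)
The plan is to mirror the approach that produced the $L^2$ estimates in Lemma~\ref{Lemma Regularization L2}, but now testing the primal variational formulation with the difference $v = u_\varrho - \overline{u}$ rather than with $u_\varrho$ alone. Since $\overline{u} \in H^1_0(\Omega)$, this is an admissible test function, and the resulting identity should pair naturally with testing the adjoint formulation \eqref{VF adjoint} with $q = p_\varrho$.

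For \eqref{regularization H1 H1}, I would first note that from \eqref{VF adjoint} with $q = p_\varrho$,
\[
  \| \nabla p_\varrho \|^2_{L^2(\Omega)} = \langle u_\varrho - \overline{u}, p_\varrho \rangle_{L^2(\Omega)},
\]
so that testing \eqref{VF primal} with $v = u_\varrho - \overline{u}$ and substituting yields
\[
  \frac{1}{\varrho} \| \nabla p_\varrho \|^2_{L^2(\Omega)} + \langle \nabla u_\varrho, \nabla(u_\varrho - \overline{u}) \rangle_{L^2(\Omega)} = 0.
\]
Splitting $\nabla u_\varrho = \nabla(u_\varrho - \overline{u}) + \nabla \overline{u}$ in the second term then gives
\[
  \frac{1}{\varrho} \| \nabla p_\varrho \|^2_{L^2(\Omega)} + \| \nabla(u_\varrho - \overline{u}) \|^2_{L^2(\Omega)} = -\langle \nabla \overline{u}, \nabla(u_\varrho - \overline{u}) \rangle_{L^2(\Omega)},
\]
and a single application of Cauchy--Schwarz on the right-hand side, followed by division, delivers \eqref{regularization H1 H1}.

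For \eqref{regularization H1 H2}, I would start from the same identity but now exploit the additional regularity $\Delta \overline{u} \in L^2(\Omega)$. Since $u_\varrho - \overline{u} \in H^1_0(\Omega)$, integration by parts turns the right-hand side into $\langle \Delta \overline{u}, u_\varrho - \overline{u} \rangle_{L^2(\Omega)}$; estimating this by Cauchy--Schwarz and then plugging in the already established $L^2$ bound \eqref{regularization L2 H2} produces
\[
  \| \nabla(u_\varrho - \overline{u}) \|^2_{L^2(\Omega)} \leq \| \Delta \overline{u} \|_{L^2(\Omega)} \, \| u_\varrho - \overline{u} \|_{L^2(\Omega)} \leq \varrho^{1/2} \, \| \Delta \overline{u} \|^2_{L^2(\Omega)},
\]
from which \eqref{regularization H1 H2} follows by taking square roots.

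There is no real obstacle here; the one conceptual point to get right is the choice of test function $v = u_\varrho - \overline{u}$, which is only legitimate because $\overline{u} \in H^1_0(\Omega)$ under the hypotheses of both parts. The second estimate then leverages the previously proved $L^2$ bound \eqref{regularization L2 H2} in a bootstrap fashion, so no new manipulations of $p_\varrho$ are needed beyond those already performed in the proof of Lemma~\ref{Lemma Regularization L2}.
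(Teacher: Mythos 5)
Your proof is correct and follows essentially the same route as the paper: test \eqref{VF primal} with $v = u_\varrho - \overline{u}$ and \eqref{VF adjoint} with $q = p_\varrho$ to get the key identity, apply Cauchy--Schwarz for \eqref{regularization H1 H1}, and integrate by parts plus invoke \eqref{regularization L2 H2} for \eqref{regularization H1 H2}. No issues.
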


\begin{proof}
  Due to the assumption $\overline{u} \in H^1_0(\Omega)$ we can use
  $v = u_\varrho - \overline{u}$ as test function in \eqref{VF primal}
  and $q=p_\varrho$ in \eqref{VF adjoint} to obtain
  \[
    \langle \nabla u_\varrho ,
    \nabla (u_\varrho-\overline{u}) \rangle_{L^2(\Omega)} =
    - \frac{1}{\varrho} \, \langle p_\varrho ,
    u_\varrho - \overline{u} \rangle_{L^2(\Omega)} =
    - \frac{1}{\varrho} \, \langle \nabla p_\varrho ,
    \nabla_\varrho p \rangle_{L^2(\Omega)},
  \]
  i.e., we have
  \begin{eqnarray*}
    \frac{1}{\varrho} \, \| \nabla p_\varrho \|^2_{L^2(\Omega)} +
    \| \nabla (u_\varrho - \overline{u}) \|^2_{L^2(\Omega)}
    & = & \langle \nabla \overline{u} ,
          \nabla (\overline{u} - u_\varrho) \rangle_{L^2(\Omega)} \\
    & \leq & \| \nabla \overline{u} \|_{L^2(\Omega)}
             \| \nabla (u_\varrho - \overline{u}) \|_{L^2(\Omega)} ,
  \end{eqnarray*}
  from which we conclude \eqref{regularization H1 H1}.

  On the other hand, when assuming $\overline{u} \in H^1_0(\Omega)$
  satisfying $\Delta \overline{u} \in L^2(\Omega)$,
  and when applying integration by parts, we also have
  \begin{eqnarray*}
    \frac{1}{\varrho} \, \| \nabla p_\varrho \|^2_{L^2(\Omega)} +
    \| \nabla (u_\varrho - \overline{u}) \|^2_{L^2(\Omega)}
    & = & \langle \nabla \overline{u} ,
          \nabla (\overline{u} - u_\varrho) \rangle_{L^2(\Omega)} \\
    & & \hspace*{-3cm} = \, \langle \Delta \overline{u} ,
          u_\varrho - \overline{u} \rangle_{L^2(\Omega)}
          \leq \| \Delta \overline{u} \|_{L^2(\Omega)}
          \| u_\varrho - \overline{u} \|_{L^2(\Omega)} .
\end{eqnarray*}
Finally, using \eqref{regularization L2 H2}, this gives
\[
  \| \nabla (u_\varrho - \overline{u}) \|^2_{L^2(\Omega)}
  \leq \| \Delta \overline{u} \|_{L^2(\Omega)}
  \| u_\varrho - \overline{u} \|_{L^2(\Omega)} \leq \varrho^{1/2} \,
  \| \Delta \overline{u} \|^2_{L^2(\Omega)},
\]
i.e., \eqref{regularization H1 H2} follows.
\end{proof}

%
%

\section{Finite element error estimates}
\label{sec:FiniteElementErrorEstimates}

For the numerical solution of the coupled variational formulation
\eqref{VF primal} and \eqref{VF adjoint} we first introduce the
transformation $p_\varrho(x) = \sqrt{\varrho} \, \widetilde{p}_\varrho(x)$,
i.e., we consider the variational formulation to find
$(u_\varrho,\widetilde{p}_\varrho) \in H^1_0(\Omega) \times H^1_0(\Omega)$
such that
\begin{equation}\label{VF primal scaling}
  \frac{1}{\sqrt{\varrho}} \, \langle \widetilde{p}_\varrho ,
  v \rangle_{L^2(\Omega)} +
  \langle \nabla u_\varrho , \nabla v \rangle_{L^2(\Omega)} = 0 \quad
  \mbox{for all} \; v \in H^1_0(\Omega),
\end{equation}
and
\begin{equation}\label{VF adjoint scaling}
  - \langle \nabla \widetilde{p}_\varrho , \nabla q \rangle_{L^2(Q)}
  + \frac{1}{\sqrt{\varrho}} \, \langle u_\varrho , q \rangle_{L^2(\Omega)}
  = \frac{1}{\sqrt{\varrho}}
  \langle \overline{u} , q \rangle_{L^2(Q)} \quad \mbox{for all} \;
  q \in H^1_0(\Omega) .
\end{equation}
Let $V_h = S_h^1(\Omega) \cap H^1_0(\Omega)
= \mbox{span} \{ \varphi_{h,k}\}_{k=1}^{N_h} 
= \mbox{span} \{ \varphi_{k}\}_{k=1}^{N_h}$ 
be the finite element space of piecewise linear and continuous basis
functions which are defined with respect
to some admissible decomposition of the computational domain $\Omega$
into shape regular and globally quasi-uniform simplicial finite elements
of mesh size $h$. For simplicity, we omit the subindex $h$ from the
basis functions $\varphi_{h,k}$.
The Galerkin variational formulation of
\eqref{VF primal scaling} and \eqref{VF adjoint scaling} is to find
$(u_{\varrho h},\widetilde{p}_{\varrho h}) \in V_h \times V_h$ such that
\begin{equation}\label{FEM primal}
  \frac{1}{\sqrt{\varrho}} \,
  \langle \widetilde{p}_{\varrho h} , v_h \rangle_{L^2(\Omega)} +
  \langle \nabla u_{\varrho h} , \nabla v_h \rangle_{L^2(\Omega)} = 0 \quad
  \mbox{for all} \; v_h \in V_h,
\end{equation}
and
\begin{equation}\label{FEM adjoint}
  - \langle \nabla \widetilde{p}_{\varrho h} , \nabla q_h \rangle_{L^2(Q)} +
  \frac{1}{\sqrt{\varrho}} \, \langle u_{\varrho h} , q_h \rangle_{L^2(\Omega)} =
  \frac{1}{\sqrt{\varrho}} \,
  \langle \overline{u} , q_h \rangle_{L^2(Q)} \quad \mbox{for all} \;
  q_h \in V_h .
\end{equation}
The mixed finite element scheme \eqref{FEM primal} and \eqref{FEM adjoint}
has obviously a unique solution. Indeed, choosing the test function
$v_h = \widetilde{p}_{\varrho h}$ in \eqref{FEM primal} as well as
$q_h = u_{\varrho h}$ in \eqref{FEM adjoint}, and adding both equations,
we see that $\widetilde{p}_{\varrho h}$ and $u_{\varrho h}$ must be zero
for the homogeneous equations ($\overline{u}=0$).
Now uniqueness always yields existence in the linear finite-dimensional case.
So, the corresponding system of algebraic equation also has a unique solution 
and vice versa; see also Section~\ref{sec:RobustSolvers}.

Now we are in a position to formulate the main result of this section.

\begin{theorem}
  Let $(u_{\varrho h}, \widetilde{p}_{\varrho h}) \in V_h \times V_h$ be
  the unique solution of the coupled finite element variational formulation
  \eqref{FEM primal} and \eqref{FEM adjoint}. Assume that the underlying
  finite element mesh is globally quasi-uniform such that an inverse
  inequality in $V_h$ is valid, and consider $\varrho = h^4$.
  For $\overline{u} \in H^1_0(\Omega)$ then there holds the error estimate
  \begin{equation}\label{Final error L2 H1}
    \| u_{\varrho h} - \overline{u} \|_{L^2(\Omega)} \, \leq \, c \, h \,
    |\overline{u}|_{H^1(\Omega)} .
  \end{equation}
  For $\overline{u} \in H^1_0(\Omega) \cap H^2(\Omega)$, and if the
  domain $\Omega$ is either smoothly bounded or convex,
  then we also have
  \begin{equation}\label{Final error L2 H2}
    \| u_{\varrho h} - \overline{u} \|_{L^2(\Omega)} \leq c \, h^2 \,
    | \overline{u} |_{H^2(\Omega)} .
  \end{equation}  
\end{theorem}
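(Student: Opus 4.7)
The plan is to split the error by the triangle inequality
\[
\|u_{\varrho h}-\overline u\|_{L^2(\Omega)} \le \|u_\varrho-\overline u\|_{L^2(\Omega)} + \|u_{\varrho h}-u_\varrho\|_{L^2(\Omega)},
\]
and to treat the two contributions separately. With $\varrho=h^4$, the regularization part is already delivered by Lemma~\ref{Lemma Regularization L2}: \eqref{regularization L2 H1} gives exactly $h\,|\overline u|_{H^1(\Omega)}$ in case~1, and \eqref{regularization L2 H2} together with $\|\Delta\overline u\|_{L^2(\Omega)}\le|\overline u|_{H^2(\Omega)}$ gives $h^2|\overline u|_{H^2(\Omega)}$ in case~2, i.e.\ precisely the stated orders. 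All the work therefore lies in the finite element contribution $\|u_{\varrho h}-u_\varrho\|_{L^2(\Omega)}$, and the plan below aims at the same two orders.

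For that part, I would view the scaled formulation \eqref{VF primal scaling}--\eqref{VF adjoint scaling} as a single saddle-point problem with bilinear form
\[
\mathcal{B}\bigl((u,\widetilde p),(v,q)\bigr) = \tfrac{1}{\sqrt{\varrho}}\langle \widetilde p,v\rangle_{L^2(\Omega)} + \langle\nabla u,\nabla v\rangle_{L^2(\Omega)} - \langle\nabla\widetilde p,\nabla q\rangle_{L^2(\Omega)} + \tfrac{1}{\sqrt{\varrho}}\langle u,q\rangle_{L^2(\Omega)},
\]
and equip $H^1_0(\Omega)\times H^1_0(\Omega)$ with the $\varrho$-weighted norm
\[
\|(u,\widetilde p)\|_\varrho^2 = \tfrac{1}{\sqrt{\varrho}}\bigl(\|u\|_{L^2(\Omega)}^2+\|\widetilde p\|_{L^2(\Omega)}^2\bigr) + |u|_{H^1(\Omega)}^2 + |\widetilde p|_{H^1(\Omega)}^2.
\]
A direct calculation, in which the two off-diagonal $L^2$-pairings and the two off-diagonal $H^1$-pairings cancel, yields $\mathcal{B}\bigl((u,\widetilde p),(u+\widetilde p,u-\widetilde p)\bigr)=\|(u,\widetilde p)\|_\varrho^2$, while the parallelogram law gives $\|(u+\widetilde p,u-\widetilde p)\|_\varrho=\sqrt{2}\,\|(u,\widetilde p)\|_\varrho$. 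This produces an inf-sup constant $\ge 1/\sqrt{2}$ that is independent of $\varrho$ and $h$. Since $V_h\times V_h$ is stable under the test-function combination $(u,\widetilde p)\mapsto(u+\widetilde p,u-\widetilde p)$, the same inf-sup survives at the discrete level, and the Galerkin orthogonality implied by \eqref{FEM primal}--\eqref{FEM adjoint} then reduces the FEM error in $\|\cdot\|_\varrho$ via C\'ea's lemma to the best approximation error of $(u_\varrho,\widetilde p_\varrho)$ from $V_h\times V_h$, with an absolute constant.

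I would bound the best approximation by inserting the Scott--Zhang quasi-interpolant $I_h$ and using the standard estimates $\|w-I_h w\|_{L^2(\Omega)}\le c\,h^k|w|_{H^k(\Omega)}$ and $|w-I_h w|_{H^1(\Omega)}\le c\,h^{k-1}|w|_{H^k(\Omega)}$ for $k\in\{1,2\}$. Because $\varrho^{-1/2}=h^{-2}$ when $\varrho=h^4$, the $L^2$-part of the $\varrho$-norm of the interpolation error is of exactly the same order as the $H^1$-seminorm part, and the best approximation is therefore controlled by $C(|u_\varrho|_{H^k(\Omega)}+|\widetilde p_\varrho|_{H^k(\Omega)})$ for $k=1$, respectively by $C\,h\,(|u_\varrho|_{H^2(\Omega)}+|\widetilde p_\varrho|_{H^2(\Omega)})$ for $k=2$. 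The needed uniform-in-$\varrho$ regularity of $(u_\varrho,\widetilde p_\varrho)$ in terms of $\overline u$ is already contained in Section~\ref{sec:RegularizationErrorEstimates}: \eqref{regularization H1 H1} together with the auxiliary computation $\|\nabla p_\varrho\|_{L^2(\Omega)}\le\sqrt{\varrho}\,|\overline u|_{H^1(\Omega)}$ appearing inside the proof of Lemma~\ref{Lemma Regularization L2} yields $|u_\varrho|_{H^1(\Omega)},|\widetilde p_\varrho|_{H^1(\Omega)}\le C|\overline u|_{H^1(\Omega)}$; the intermediate bound $\|\Delta u_\varrho\|_{L^2(\Omega)}\le\|\Delta\overline u\|_{L^2(\Omega)}$ from the proof of \eqref{regularization L2 H2}, combined with the strong forms $-\Delta u_\varrho = -\widetilde p_\varrho/\sqrt{\varrho}$ and $-\sqrt{\varrho}\,\Delta\widetilde p_\varrho = u_\varrho-\overline u$ and with elliptic $H^2$-regularity on a convex or smooth $\Omega$, gives $|u_\varrho|_{H^2(\Omega)},|\widetilde p_\varrho|_{H^2(\Omega)}\le C|\overline u|_{H^2(\Omega)}$. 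Reading off only the $L^2$-component via $\|u_\varrho-u_{\varrho h}\|_{L^2(\Omega)}\le\varrho^{1/4}\,\|(u_\varrho-u_{\varrho h},\widetilde p_\varrho-\widetilde p_{\varrho h})\|_\varrho = h\,\|(\cdot,\cdot)\|_\varrho$ then produces $C\,h\,|\overline u|_{H^1(\Omega)}$ in case~1 and $C\,h^2|\overline u|_{H^2(\Omega)}$ in case~2, and the triangle inequality from the first paragraph closes both bounds.

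The delicate step is the choice of the weighted norm $\|\cdot\|_\varrho$: it has to be strong enough that the inf-sup constant for $\mathcal{B}$ is independent of $\varrho$ and $h$, and at the same time weak enough that extracting the $L^2$-error from it absorbs exactly the factor $\varrho^{1/4}=h$ needed to reach the optimal order. This balance is precisely what the choice $\varrho=h^4$ makes possible; once it is in place, the remaining ingredients -- Scott--Zhang interpolation on the quasi-uniform mesh (where the inverse-inequality/quasi-uniformity hypothesis enters through $H^1$-stability and the consequent $\|\cdot\|_{L^2}$--$|\cdot|_{H^1}$ balance in the weighted norm) together with the regularity already proved in Section~\ref{sec:RegularizationErrorEstimates} -- are routine.
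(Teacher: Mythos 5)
Your proposal is correct, and the end game (triangle inequality with the regularization estimates of Section~\ref{sec:RegularizationErrorEstimates}, C\'ea-type quasi-optimality in the weighted norm $\varrho^{-1/2}\|\cdot\|_{L^2}^2+|\cdot|_{H^1}^2$, Scott--Zhang interpolation, and the intermediate regularity bounds $\|\nabla p_\varrho\|_{L^2}\le\sqrt{\varrho}\,|\overline u|_{H^1}$ and $\|\Delta u_\varrho\|_{L^2}\le\|\Delta\overline u\|_{L^2}$) coincides with the paper's. The genuine difference is the stability mechanism behind C\'ea's lemma. The paper proves boundedness of the Galerkin projection $(\varphi,\psi)\mapsto(\varphi_h,\psi_h)$ by testing each discrete equation with its own solution and invoking the inverse inequality $\|\nabla v_h\|_{L^2}^2\le c_I h^{-2}\|v_h\|_{L^2}^2$, which produces the factor $\bigl(1+c_Ih^{-2}\sqrt{\varrho}\bigr)^2$; this is precisely where the quasi-uniformity hypothesis and the restriction $\varrho\lesssim h^4$ enter the paper's stability argument. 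You instead establish a discrete inf-sup condition with constant $1/\sqrt{2}$ via the test pair $(v,q)=(u+\widetilde p,\,u-\widetilde p)$, under which the off-diagonal pairings cancel exactly; together with the (unstated but immediate) boundedness of $\mathcal{B}$ in $\|\cdot\|_\varrho$, this gives quasi-optimality with constants independent of both $\varrho$ and $h$ and without any inverse inequality. Your route is therefore somewhat stronger: the relation $\varrho=h^4$ is used only to balance the $L^2$- and $H^1$-parts of the interpolation error and to extract the $L^2$-error at order $\varrho^{1/4}=h$, not for stability, so the inverse-inequality hypothesis becomes superfluous (your closing remark attributing a role to it in the $L^2$--$H^1$ balance is not needed; shape regularity suffices for Scott--Zhang). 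The paper's argument is more elementary in that it avoids saddle-point/inf-sup machinery altogether. A cosmetic point: the bound $\|\nabla p_\varrho\|_{L^2}\le\sqrt{\varrho}\,\|\nabla\overline u\|_{L^2}$ is obtained in the paper inside the proof of the theorem via the $H^{-1}$ regularization estimate of the cited reference, but, as you indicate, it also follows by combining the intermediate inequalities $\|\nabla p_\varrho\|_{L^2}^2\le\|u_\varrho-\overline u\|_{L^2}\|p_\varrho\|_{L^2}$, \eqref{regularization L2 H1}, and $\|p_\varrho\|_{L^2}\le\varrho^{3/4}\|\nabla\overline u\|_{L^2}$ from the proof of Lemma~\ref{Lemma Regularization L2}, so no gap arises.
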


\begin{proof}
  For given $(\varphi,\psi) \in H^1_0(\Omega) \times H^1_0(\Omega)$ define
  $(\varphi_h,\psi_h) \in V_h \times V_h$ as unique solutions satisfying
  the variational formulations
  \[
    \frac{1}{\sqrt{\varrho}} \, \langle \psi_h , v_h \rangle_{L^2(\Omega)} +
    \langle \nabla \varphi_h , \nabla v_h \rangle_{L^2(\Omega)} =
    \frac{1}{\sqrt{\varrho}} \, \langle \psi , v_h \rangle_{L^2(\Omega)} +
    \langle \nabla \varphi , \nabla v_h \rangle_{L^2(\Omega)}, \;
    \forall v_h \in V_h,
  \]
  and
  \[
    - \langle \nabla \psi_h , \nabla q_h \rangle_{L^2(\Omega)} +
    \frac{1}{\sqrt{\varrho}} \, \langle \varphi_h , q_h \rangle_{L^2(\Omega)}
    =
    - \langle \nabla \psi , \nabla q_h \rangle_{L^2(\Omega)} +
    \frac{1}{\sqrt{\varrho}} \, \langle \varphi , q_h \rangle_{L^2(\Omega)},
    q_h \in V_h.
  \]
  When using an inverse inequality in $V_h$, this gives
  \begin{eqnarray*}
    && \hspace*{-5mm}
       \frac{1}{\sqrt{\varrho}} \, \| \varphi_h \|^2_{L^2(\Omega)} +
       \| \nabla \varphi_h \|^2_{L^2(\Omega)} +
       \frac{1}{\sqrt{\varrho}} \, \| \psi_h \|^2_{L^2(\Omega)} +
       \| \nabla \psi_h \|^2_{L^2(\Omega)} \\
    && \hspace*{-3mm} \leq \,
       \left( \frac{1}{\sqrt{\varrho}} + c_I \, h^{-2} \right)
       \| \varphi_h \|^2_{L^2(\Omega)} +
       \left( \frac{1}{\sqrt{\varrho}} + c_I \, h^{-2} \right)
       \| \psi_h \|^2_{L^2(\Omega)} \\
    && \hspace*{-3mm}
       = \, \Big( 1+ c_I \, h^{-2} \, \sqrt{\varrho} \Big)
       \left[ \frac{1}{\sqrt{\varrho}}
       \| \varphi_h \|^2_{L^2(\Omega)} + \frac{1}{\sqrt{\varrho}} 
       \| \psi_h \|^2_{L^2(\Omega)} \right] \\
    && \hspace*{-3mm}
       = \, \Big( 1 + c_I \, h^{-2} \, \sqrt{\varrho} \Big)
       \left[ \frac{1}{\sqrt{\varrho}}
       \langle \varphi_h , \varphi_h \rangle_{L^2(\Omega)}
       - \langle \nabla \psi_h , \nabla \varphi_h \rangle_{L^2(\Omega)}
       \right. \\
    && \hspace*{3cm} \left.
       + \langle \nabla \varphi_h , \nabla \psi_h \rangle_{L^2(\Omega)}
       + \frac{1}{\sqrt{\varrho}} 
       \langle \psi_h , \psi_h \rangle_{L^2(\Omega)} \right] \\
    && \hspace*{-3mm}
       = \, \Big( 1 + c_I \, h^{-2} \, \sqrt{\varrho} \Big)
       \left[ \frac{1}{\sqrt{\varrho}}
       \langle \varphi , \varphi_h \rangle_{L^2(\Omega)}
       - \langle \nabla \psi , \nabla \varphi_h \rangle_{L^2(\Omega)}
       \right. \\
    && \hspace*{3cm} \left.
       + \langle \nabla \varphi , \nabla \psi_h \rangle_{L^2(\Omega)}
       + \frac{1}{\sqrt{\varrho}} 
       \langle \psi , \psi_h \rangle_{L^2(\Omega)} \right] \\
    && \hspace*{-3mm}
       \leq \, \Big( 1 + c_I \, h^{-2} \, \sqrt{\varrho} \Big)
       \left[ \frac{1}{\sqrt{\varrho}} \, \| \varphi \|_{L^2(\Omega)}
       \| \varphi_h \|_{L^2(\Omega)} + \| \nabla \psi \|_{L^2(\Omega)}
       \| \nabla \varphi_h \|^2_{L^2(\Omega)} \right. \\
  && \hspace*{3cm} \left.
     + \| \nabla \varphi \|_{L^2(\Omega)} \| \nabla \psi_h \|_{L^2(\Omega)}
     + \frac{1}{\sqrt{\varrho}} \, \| \psi \|_{L^2(\Omega)}
     \| \psi_h \|_{L^2(\Omega)}
     \right] \\
    && \hspace*{-3mm}
       \leq \, \Big( 1+ c_I \, h^{-2} \, \sqrt{\varrho} \Big)
       \left[ \frac{1}{\sqrt{\varrho}} \, \| \varphi \|_{L^2(\Omega)}^2 +
       \| \nabla \varphi \|^2_{L^2(\Omega)} + \frac{1}{\sqrt{\varrho}} \,
       \| \psi \|_{L^2(\Omega)}^2 +
       \| \nabla \psi \|_{L^2(\Omega)}^2 \right]^{1/2} \\
  && \hspace*{1.5cm} \cdot
     \left[
     \frac{1}{\sqrt{\varrho}} \, \| \varphi_h \|_{L^2(\Omega)}^2 +
     \| \nabla \varphi_h \|^2_{L^2(\Omega)} + \frac{1}{\sqrt{\varrho}} \,
     \| \psi_h \|_{L^2(\Omega)}^2 +
     \| \nabla \psi_h \|_{L^2(\Omega)}^2 \right]^{1/2}.
\end{eqnarray*}
Hence,
\begin{eqnarray*}
&& \hspace*{-5mm} \frac{1}{\sqrt{\varrho}} \, \| \varphi_h \|_{L^2(\Omega)}^2 +
     \| \nabla \varphi_h \|^2_{L^2(\Omega)} + \frac{1}{\sqrt{\varrho}} \,
     \| \psi_h \|_{L^2(\Omega)}^2 +
   \| \nabla \psi_h \|_{L^2(\Omega)}^2 \\
  && \hspace*{-3mm} \leq \,
     \Big( 1+ c_I \, h^{-2} \, \sqrt{\varrho} \Big)^2 \left[
     \frac{1}{\sqrt{\varrho}} \, \| \varphi \|_{L^2(\Omega)}^2 +
     \| \nabla \varphi \|^2_{L^2(\Omega)} + \frac{1}{\sqrt{\varrho}} \,
     \| \psi \|_{L^2(\Omega)}^2 +
     \| \nabla \psi \|_{L^2(\Omega)}^2 \right] .
\end{eqnarray*}
In fact, the Galerkin projection $(\varphi,\psi) \mapsto (\varphi_h,\psi_h)$
is bounded, and in particular for $\varrho = h^4$ we therefore conclude
Cea's lemma, i.e., for arbitrary $(v_h,q_h) \in V_h \times V_h$ we have
\begin{eqnarray*}
&& h^{-2} \, \| u_\varrho -u_{\varrho h} \|_{L^2(\Omega)}^2 +
   \| \nabla (u_\varrho -u_{\varrho h}) \|^2_{L^2(\Omega)} \\
  &&  \hspace*{5mm} + \, h^{-2} \,
     \| \widetilde{p}_\varrho - \widetilde{p}_{\varrho h} \|_{L^2(\Omega)}^2 +
   \| \nabla (\widetilde{p}_\varrho - \widetilde{p}_{\varrho h})
   \|_{L^2(\Omega)}^2 \\
  && \hspace*{10mm} \leq \,
     c \cdot  \left[
     h^{-2} \, \| u_\varrho - v_h \|_{L^2(\Omega)}^2 +
     \| \nabla (u_\varrho - v_h)\|^2_{L^2(\Omega)} \right. \\
  && \hspace*{20mm} \left. + \, h^{-2} \,
     \| \widetilde{p}_\varrho - q_h \|_{L^2(\Omega)}^2 +
     \| \nabla ( \widetilde{p}_\varrho - q_h) \|_{L^2(\Omega)}^2 \right] .
\end{eqnarray*}
For $\overline{u} \in H^1_0(\Omega)$ we can consider $v_h=\overline{u}_h$
being a suitable approximation, e.g.,
Scott--Zhang interpolation \cite{BrennerScott}.
Then, using \eqref{regularization L2 H1},
this gives, recall $\varrho = h^4$,
\begin{eqnarray*}
  \| u_\varrho - \overline{u}_h \|_{L^2(\Omega)}
  & \leq & \| u_\varrho - \overline{u} \|_{L^2(\Omega)} +
           \| \overline{u} - \overline{u}_h \|_{L^2(\Omega)} \\
  & \leq & \varrho^{1/4} \, | \overline{u} |_{H^1(\Omega)} +
           c \, h \, |\overline{u}|_{H^1(\Omega)} \leq
           c \, h \, |\overline{u}|_{H^1(\Omega)} .
\end{eqnarray*}
Moreover, now using \eqref{regularization H1 H1}, we also have
\[
  \| \nabla(u_\varrho - \overline{u}_h) \|_{L^2(\Omega)}
  \leq \| \nabla(u_\varrho - \overline{u}) \|_{L^2(\Omega)} +
       \| \nabla (\overline{u} - \overline{u}_h) \|_{L^2(\Omega)} 
  \leq (1+c) \, \| \nabla \overline{u} \|_{L^2(\Omega)} \, .
\]
Correspondingly, let $q_h = \Pi_h \widetilde{p}_\varrho$ be some
suitable approximation, e.g., again the Scott--Zhang interpolation.
Then,
\[
  \| \widetilde{p}_\varrho - \Pi_h \widetilde{p}_\varrho \|_{L^2(\Omega)}
  \leq c \, h \, \| \nabla \widetilde{p}_\varrho \|_{L^2(\Omega)}
           \, = \, c \, h \, \frac{1}{\sqrt{\varrho}} \,
           \| \nabla p_\varrho \|_{L^2(\Omega)} \, .
\]
From \eqref{VF adjoint}, and using duality we first have
\[
  \| \nabla p_\varrho \|^2_{L^2(\Omega)}
  = \langle \nabla p_\varrho , \nabla p_\varrho \rangle_{L^2(\Omega)}
  = 
  \langle u_\varrho - \overline{u} , p_\varrho \rangle_{L^2(\Omega)} 
  \leq \| u_\varrho - \overline{u} \|_{H^{-1}(\Omega)(\Omega)}
  \| \nabla p_\varrho \|_{L^2(\Omega)},
\]
and with \cite[Theorem 4.1]{LLSY:NeumuellerSteinbach:2021a} this gives
\[
  \| \nabla p_\varrho \|_{L^2(\Omega)} \leq 
  \| u_\varrho - \overline{u} \|_{H^{-1}(\Omega)} \leq
  \sqrt{\varrho} \, | \overline{u} |_{H^1(\Omega)} .
\]
Hence we conclude
\[
  \| \widetilde{p}_\varrho - \Pi_h \widetilde{p}_\varrho \|_{L^2(\Omega)}
  \leq c \, h \, |\overline{u}|_{H^1(\Omega)} .
\]
In the same way as above we also obtain
\[
  \| \nabla (\widetilde{p}_\varrho - \Pi_h \widetilde{p}_\varrho)
  \|_{L^2(\Omega)} \leq \| \nabla \widetilde{p}_\varrho \|_{L^2(\Omega)}
  = \frac{1}{\sqrt{\varrho}} \, \| \nabla p_\varrho \|_{L^2(\Omega)}
  \leq |\overline{u}|_{H^1(\Omega)} .
\]
Now, summing up all contributions this gives
\begin{eqnarray*}
&& h^{-2} \, \| u_\varrho -u_{\varrho h} \|_{L^2(\Omega)}^2 +
   \| \nabla (u_\varrho -u_{\varrho h}) \|^2_{L^2(\Omega)} \\
  &&  \hspace*{5mm} + \, h^{-2} \,
     \| \widetilde{p}_\varrho - \widetilde{p}_{\varrho h} \|_{L^2(\Omega)}^2 +
   \| \nabla (\widetilde{p}_\varrho - \widetilde{p}_{\varrho h})
   \|_{L^2(\Omega)}^2 \, \leq \, c \, |\overline{u}|_{H^1(\Omega)}^2 ,
\end{eqnarray*}
in particular we have
\[
  \| u_\varrho -u_{\varrho h} \|_{L^2(\Omega)}^2 \leq
  c \, h^2 \, |\overline{u}|_{H^1(\Omega)}^2 .
\]
Hence, \eqref{Final error L2 H1} follows from, again using
\eqref{regularization L2 H1} and $\varrho = h^4$,
\[
  \| u_{\varrho h} - \overline{u} \|_{L^2(\Omega)} \leq
  \| u_{\varrho h} - u_\varrho \|_{L^2(\Omega)} +
  \| u_\varrho - \overline{u} \|_{L^2(\Omega)} \leq
  c \, h \, |\overline{u}|_{H^1(\Omega)} + \varrho^{1/4} \,
  |\overline{u}|_{H^1(\Omega)} .
\]
It remains to consider $\overline{u} \in H^1_0(\Omega) \cap H^2(\Omega)$.
We proceed as above, but with \eqref{regularization L2 H2} we first obtain
\begin{eqnarray*}
  \| u_\varrho - \overline{u}_h \|_{L^2(\Omega)}
  & \leq & \| u_\varrho - \overline{u} \|_{L^2(\Omega)} +
           \| \overline{u} - \overline{u}_h \|_{L^2(\Omega)} \\
  & \leq & \varrho^{1/2} \, \| \Delta \overline{u} \|_{L^2(\Omega)} +
           c \, h^2 \, |\overline{u}|_{H^2(\Omega)} \leq
           c \, h^2 \, |\overline{u}|_{H^2(\Omega)} ,
\end{eqnarray*}
while with \eqref{regularization H1 H2} we conclude
\begin{eqnarray*}
  \| \nabla(u_\varrho - \overline{u}_h) \|_{L^2(\Omega)}
  & \leq & \| \nabla(u_\varrho - \overline{u}) \|_{L^2(\Omega)} +
           \| \nabla (\overline{u} - \overline{u}_h) \|_{L^2(\Omega)} \\ 
  & \leq & \varrho^{1/4} \, \| \Delta \overline{u} \|_{L^2(\Omega)}
           + c \, h \, | \overline{u} |_{H^2(\Omega)} \, \leq \,
           c \, h \, |\overline{u}|_{H^2(\Omega)} .
\end{eqnarray*}
Moreover, we also have, in the case of a smoothly bounded or convex domain
$\Omega$, and using \eqref{regularization L2 H2},
\begin{eqnarray*}
  \| \widetilde{p}_\varrho - \Pi_h \widetilde{p}_\varrho \|_{L^2(\Omega)}
  & \leq & c \, h^2 \, | \widetilde{p}_\varrho |_{H^2(\Omega)} =
           c \, h^2 \, \frac{1}{\sqrt{\varrho}} \, |p_\varrho|_{H^2(\Omega)}
           \leq c \, h^2 \, \frac{1}{\sqrt{\varrho}} \,
           \|\Delta p_\varrho \|_{L^2(\Omega)} \\
  & = & c \, h^2 \, \frac{1}{\sqrt{\varrho}} \,
      \| u_\varrho - \overline{u} \|_{L^2(\Omega)}
      \leq c \, h^2 \, \| \Delta \overline{u} \|_{L^2(\Omega)}.
\end{eqnarray*}
Finally, and following the above estimates, we have
\[
  \| \nabla (\widetilde{p}_\varrho - \Pi_h \widetilde{p}_\varrho )
  \|_{L^2(\Omega)}
  \, \leq \, c \, h \, |\widetilde{p}_\varrho |_{H^2(\Omega)} \, = \, 
           c \, h \, \frac{1}{\sqrt{\varrho}} \, |p_\varrho|_{H^2(\Omega)} 
           \leq \, c \, h \, \|\Delta \overline{u}\|_{L^2(\Omega)} .
\]
Again, summing up all contributions this gives
\begin{eqnarray*}
&& h^{-2} \, \| u_\varrho -u_{\varrho h} \|_{L^2(\Omega)}^2 +
   \| \nabla (u_\varrho -u_{\varrho h}) \|^2_{L^2(\Omega)} \\
  &&  \hspace*{5mm} + \, h^{-2} \,
     \| \widetilde{p}_\varrho - \widetilde{p}_{\varrho h} \|_{L^2(\Omega)}^2 +
   \| \nabla (\widetilde{p}_\varrho - \widetilde{p}_{\varrho h})
   \|_{L^2(\Omega)}^2 \, \leq \, c \, h^2 |\overline{u}|_{H^2(\Omega)}^2 ,
\end{eqnarray*}
i.e.,
\[
  \| u_\varrho - u_{\varrho h} \|^2_{L^2(\Omega)} \leq c \, h^4 \,
  |\overline{u}|_{H^2(\Omega)} .
\]
Now, \eqref{Final error L2 H2} follows from the triangle
inequality and using \eqref{regularization H1 H2}.
\end{proof}

In order to derive error estimates for less regular targets
$\overline{u}$ we also need the following result.

\begin{lemma}
  Let $(u_{\varrho h}, \widetilde{p}_{\varrho h}) \in V_h \times V_h$ be
  the unique solution of the coupled finite element variational formulation
  \eqref{FEM primal} and \eqref{FEM adjoint}.
  For $\overline{u} \in L^2(\Omega)$ there holds the error estimate
  \begin{equation}\label{Final error L2 L2}
    \| u_{\varrho h} - \overline{u} \|_{L^2(\Omega)} \leq
    \| \overline{u} \|_{L^2(\Omega)} .
  \end{equation}
\end{lemma}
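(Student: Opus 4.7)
The plan is to imitate the proof of \eqref{regularization L2 L2} from Lemma \ref{Lemma Regularization L2} at the discrete level. The key observation is that the arguments used there only required testing the variational formulations with the solutions themselves, and since $u_{\varrho h}$ and $\widetilde{p}_{\varrho h}$ lie in $V_h$, they are admissible test functions in \eqref{FEM primal} and \eqref{FEM adjoint}. In particular, no approximation properties of $V_h$ are needed; the estimate is a pure stability bound.

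Concretely, first I would test \eqref{FEM primal} with $v_h = \widetilde{p}_{\varrho h}$ and \eqref{FEM adjoint} with $q_h = u_{\varrho h}$. This yields
\[
  \frac{1}{\sqrt{\varrho}} \, \| \widetilde{p}_{\varrho h} \|^2_{L^2(\Omega)}
  + \langle \nabla u_{\varrho h}, \nabla \widetilde{p}_{\varrho h}
  \rangle_{L^2(\Omega)} = 0
\]
and
\[
  - \langle \nabla \widetilde{p}_{\varrho h}, \nabla u_{\varrho h}
  \rangle_{L^2(\Omega)} + \frac{1}{\sqrt{\varrho}} \,
  \| u_{\varrho h} \|^2_{L^2(\Omega)} = \frac{1}{\sqrt{\varrho}} \,
  \langle \overline{u}, u_{\varrho h} \rangle_{L^2(\Omega)} .
\]
Adding the two equations eliminates the gradient cross terms and gives
$\| \widetilde{p}_{\varrho h} \|^2_{L^2(\Omega)} + \| u_{\varrho h} \|^2_{L^2(\Omega)} = \langle \overline{u}, u_{\varrho h} \rangle_{L^2(\Omega)}$ after multiplying by $\sqrt{\varrho}$.

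Next I would rewrite the right-hand side by completing the square: using the identity $\| u_{\varrho h} \|^2_{L^2(\Omega)} - \langle \overline{u}, u_{\varrho h} \rangle_{L^2(\Omega)} = \| u_{\varrho h} - \overline{u} \|^2_{L^2(\Omega)} + \langle u_{\varrho h} - \overline{u}, \overline{u} \rangle_{L^2(\Omega)} - \| \overline{u} \|^2_{L^2(\Omega)} + \langle \overline{u}, \overline{u} - u_{\varrho h} \rangle_{L^2(\Omega)}$, one rearranges the previous identity into
\[
  \| \widetilde{p}_{\varrho h} \|^2_{L^2(\Omega)} +
  \| u_{\varrho h} - \overline{u} \|^2_{L^2(\Omega)} =
  \langle \overline{u} - u_{\varrho h}, \overline{u} \rangle_{L^2(\Omega)} \,.
\]
A Cauchy--Schwarz estimate on the right then produces $\| u_{\varrho h} - \overline{u} \|^2_{L^2(\Omega)} \leq \| u_{\varrho h} - \overline{u} \|_{L^2(\Omega)} \| \overline{u} \|_{L^2(\Omega)}$, from which \eqref{Final error L2 L2} follows by division.

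There is really no obstacle here: the whole argument is an algebraic rearrangement that is structurally identical to the continuous proof, and all steps are available at the discrete level because $u_{\varrho h}, \widetilde{p}_{\varrho h} \in V_h$ are legal test functions. The only thing to be careful about is bookkeeping of the factor $\sqrt{\varrho}$ arising from the scaling $p_\varrho = \sqrt{\varrho} \, \widetilde{p}_\varrho$ used to pass from \eqref{VF primal}--\eqref{VF adjoint} to \eqref{FEM primal}--\eqref{FEM adjoint}.
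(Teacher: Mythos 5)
Your proposal is correct and follows essentially the same route as the paper: test \eqref{FEM primal} with $v_h=\widetilde{p}_{\varrho h}$ and \eqref{FEM adjoint} with $q_h=u_{\varrho h}$, add, scale by $\sqrt{\varrho}$, rearrange to $\|\widetilde{p}_{\varrho h}\|^2_{L^2(\Omega)}+\|u_{\varrho h}-\overline{u}\|^2_{L^2(\Omega)}=\langle\overline{u}-u_{\varrho h},\overline{u}\rangle_{L^2(\Omega)}$, and finish with Cauchy--Schwarz. One small slip: the auxiliary ``completing the square'' identity you quote does not balance as written (its two sides differ by $\langle u_{\varrho h},\overline{u}\rangle_{L^2(\Omega)}$), but the rearranged equation you arrive at is nevertheless the correct one and is exactly the identity the paper uses, so this is a typo rather than a gap.
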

\begin{proof}
  We consider the Galerkin formulations \eqref{FEM primal} and
  \eqref{FEM adjoint} for the particular test functions
  $v_h = \widetilde{p}_{\varrho h}$ and $q_h = u_{\varrho h}$ to obtain
  \[
    \frac{1}{\sqrt{\varrho}} \,
    \langle \widetilde{p}_{\varrho h} ,
    \widetilde{p}_{\varrho h} \rangle_{L^2(\Omega)} +
    \langle \nabla u_{\varrho h} ,
    \nabla \widetilde{p}_{\varrho h} \rangle_{L^2(\Omega)} = 0,
  \]
  and
  \[
    - \langle \nabla \widetilde{p}_{\varrho h} ,
    \nabla u_{\varrho h} \rangle_{L^2(Q)}
    +
    \frac{1}{\sqrt{\varrho}} \, \langle u_{\varrho h} ,
    u_{\varrho h} \rangle_{L^2(\Omega)} = \frac{1}{\sqrt{\varrho}}
    \langle \overline{u} , u_{\varrho h} \rangle_{L^2(Q)} .
  \]
  When summing up both expressions this gives
  \[  
    \langle \widetilde{p}_{\varrho h} ,
    \widetilde{p}_{\varrho h} \rangle_{L^2(\Omega)} +
    \langle u_{\varrho h} , u_{\varrho h} \rangle_{L^2(\Omega)} =
    \langle \overline{u} , u_{\varrho h} \rangle_{L^2(\Omega)},
  \]
  which we can write as
  \[  
    \langle \widetilde{p}_{\varrho h} ,
    \widetilde{p}_{\varrho h} \rangle_{L^2(\Omega)} +
    \langle u_{\varrho h} - \overline{u},
    u_{\varrho h} - \overline{u} \rangle_{L^2(\Omega)} =
    \langle \overline{u} - u_{\varrho h},
    \overline{u} \rangle_{L^2(\Omega)} .
  \]
  From this we conclude \eqref{Final error L2 L2}.
\end{proof}

Now, using an interpolation argument, we can prove the final error estimate.

\begin{cor}
  Let $(u_{\varrho h}, \widetilde{p}_{\varrho h}) \in V_h \times V_h$ be
  the unique solution of the coupled finite element variational formulation
  \eqref{FEM primal} and \eqref{FEM adjoint}. Let
  $\overline{u} \in H^s_0(\Omega)$ for $ s \in [0,1]$ or
  $\overline{u} \in H^1_0(\Omega) \cap H^s(\Omega)$ for $s \in (1,2]$,
  and let $\varrho = h^4$. Then,
  \begin{equation}\label{Final error L2 Hs}
    \| u_{\varrho h} - \overline{u} \|_{L^2(\Omega)} \leq c \, h^s \,
    \| \overline{u} \|_{H^s(\Omega)} .
  \end{equation}
\end{cor}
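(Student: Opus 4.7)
My plan is to interpret the finite element error as a bounded linear operator acting on the target $\overline{u}$ and to conclude by Hilbert-space interpolation between the three endpoint estimates already established. Since the Galerkin system \eqref{FEM primal}--\eqref{FEM adjoint} depends linearly on the data $\overline{u}$, and since the subtraction $u_{\varrho h}-\overline{u}$ is also linear in $\overline{u}$, the map $T_h : \overline{u} \mapsto u_{\varrho h} - \overline{u}$ is a linear operator into $L^2(\Omega)$. The bounds \eqref{Final error L2 L2}, \eqref{Final error L2 H1} and \eqref{Final error L2 H2} (the last two for $\varrho = h^4$, and with full $H^1$ respectively $H^2$ norms replacing the seminorms) then translate into the operator norm estimates
\[
\|T_h\|_{L^2 \to L^2} \leq 1, \qquad
\|T_h\|_{H^1_0 \to L^2} \leq c\,h, \qquad
\|T_h\|_{H^1_0 \cap H^2 \to L^2} \leq c\,h^2.
\]

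First, for $s\in[0,1]$, I will interpolate the first two bounds by the real (K-functional) method. Using the standard identification $[L^2(\Omega),\,H^1_0(\Omega)]_{s,2} = H^s_0(\Omega)$ together with the interpolation theorem for bounded linear operators, one obtains $\|T_h\|_{H^s_0(\Omega)\to L^2(\Omega)} \leq 1^{1-s}(c\,h)^s = c\,h^s$, which is the desired bound on this range. Next, for $s\in(1,2]$, I will interpolate the second and third bounds with parameter $\theta = s-1\in(0,1]$ on the pair $\bigl(H^1_0(\Omega),\,H^1_0(\Omega)\cap H^2(\Omega)\bigr)$. The corresponding interpolation space is $H^1_0(\Omega)\cap H^{1+\theta}(\Omega) = H^1_0(\Omega)\cap H^s(\Omega)$, and the interpolation theorem then yields $\|T_h\|_{H^1_0\cap H^s \to L^2} \leq (c\,h)^{1-\theta}(c\,h^2)^{\theta} = c\,h^s$, which is exactly \eqref{Final error L2 Hs} on the upper range.

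The main point to be careful about is the identification of the real interpolation spaces with the fractional Sobolev spaces appearing in the statement. This is delicate at $s=1/2$, where the distinction between $H^{1/2}_0(\Omega)$, $H^{1/2}(\Omega)$, and the Lions--Magenes space $H^{1/2}_{00}(\Omega)$ plays a role, and similarly at $s=3/2$ on the upper range. Since \eqref{Final error L2 Hs} only asks for a uniform bound with a generic constant $c$ independent of $h$, these issues affect equivalence constants but not the claimed rate $h^s$ and can be absorbed into $c$; a fully equivalent route is to use complex interpolation in the Hilbert scale directly.
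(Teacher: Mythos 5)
Your proposal is correct and follows essentially the same route as the paper, which states the corollary with only the remark that it follows ``using an interpolation argument'' from the endpoint estimates \eqref{Final error L2 L2}, \eqref{Final error L2 H1}, and \eqref{Final error L2 H2}; your write-up simply makes explicit the operator-interpolation details (linearity of $\overline{u}\mapsto u_{\varrho h}-\overline{u}$ and the identification of the interpolation spaces) that the paper leaves implicit. The caveat you raise about $s=1/2$ (and $s=3/2$), where the real interpolation space is the Lions--Magenes space rather than $H^{s}_0(\Omega)$ itself, is a genuine subtlety, but it is one the paper glosses over in exactly the same way.
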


%
%

\section{Robust solvers}
\label{sec:RobustSolvers}
The finite element variational formulation \eqref{FEM primal} and
\eqref{FEM adjoint} is equivalent to a coupled linear system of
algebraic equations,
\begin{equation}\label{coupled LGS}
  \frac{1}{\sqrt{\varrho}} \, \mathbf{M}_h \mathbf{\widetilde{p}} +
  \mathbf{K}_h \mathbf{u} = \mathbf{0}, \quad \text{and} \quad
  - \mathbf{K}_h \mathbf{\widetilde{p}} +
  \frac{1}{\sqrt{\varrho}} \, \mathbf{M}_h
  \mathbf{u} = \frac{1}{\sqrt{\varrho}} \, \mathbf{f} ,
\end{equation}
where $\mathbf{K}_h$ and $\mathbf{M}_h$ are the standard finite element
stiffness and mass matrices, the entries of which are given by 
\[
  \mathbf{K}_h[\ell,k] =
  \int_\Omega \nabla \varphi_k \cdot \nabla \varphi_\ell \, dx
  \quad \text{and} \quad 
   \mathbf{M}_h[\ell,k] = \int_\Omega \varphi_k \, \varphi_\ell \, dx \quad 
  \mbox{for} \; k,\ell=1,\ldots,N_h,
\]
respectively. The load vector $\mathbf{f} = (f_\ell)_{\ell=1,\ldots,N_h}
  \in \mathbb{R}^{N_h} $ is given by its entries
\[
  f_\ell = \int_\Omega \overline{u} \, \varphi_\ell \, dx \quad
  \mbox{for} \; \ell=1,\ldots,N_h.
\]
The stiffness matrix $\mathbf{K}_h$ and the mass matrix $\mathbf{M}_h$ 
are symmetric and positive definite. Furthermore, they satisfy the spectral 
inequalities
\begin{equation}
\label{eqn:SpectralEquivalenceInequalitiesK}
\underline{c}_\text{K} h^d (\mathbf{v},\mathbf{v}) 
\le \lambda_{\text{min}}(\mathbf{K})  (\mathbf{v},\mathbf{v}) 
\le (\mathbf{K}\mathbf{v},\mathbf{v})  
\le \lambda_{\text{max}}(\mathbf{K})  (\mathbf{v},\mathbf{v}) 
\le\overline{c}_\text{K} h^{d-2} (\mathbf{v},\mathbf{v}) 
\end{equation}
and
\begin{equation}
\label{eqn:SpectralEquivalenceInequalitiesM}
\underline{c}_\text{M} h^d (\mathbf{v},\mathbf{v}) 
\le \lambda_{\text{min}}(\mathbf{M})  (\mathbf{v},\mathbf{v}) 
\le (\mathbf{M}\mathbf{v},\mathbf{v})  
\le \lambda_{\text{max}}(\mathbf{M})  (\mathbf{v},\mathbf{v}) 
\le \overline{c}_\text{M} h^{d} (\mathbf{v},\mathbf{v}) 
\end{equation}
for all $\mathbf{v} \in \mathbb{R}^{N_h}$, where  $\lambda_{\text{min}}(\cdot)$
and $\lambda_{\text{max}}(\cdot)$ always denote the minimal and maximal 
eigenvalues of the corresponding matrices, respectively, and
$(\cdot,\cdot) = (\cdot,\cdot)_{\mathbb{R}^{N_h}}$ denotes the Euclidean
inner product in the Euclidean vector space $\mathbb{R}^{N_h}$. The
positive \linebreak
constants $\underline{c}_\text{K}$, $\overline{c}_\text{K}$,
$\underline{c}_\text{M}$, and $\overline{c}_\text{M}$ are independent of
the mesh-size $h$; see, e.g., 
\cite{LLSY:ErnGuermond:2004a, LLSY:Steinbach:2008a}.

Since the mass matrix $\mathbf{M}_h$ is invertible, we can eliminate
the modified adjoint $\mathbf{\widetilde{p}}$, 
and hence we can rewrite \eqref{coupled LGS} as Schur complement system
\begin{equation}
\label{eqn:Su=f}
\Big[ \varrho \,\mathbf{K}_h \mathbf{M}_h^{-1} \mathbf{K}_h +
\mathbf{M}_h \Big] \mathbf{u} = \mathbf{f} \, .
\end{equation}
The Schur complement 
$\mathbf{S}_h = \varrho \,\mathbf{K}_h \mathbf{M}_h^{-1} \mathbf{K}_h
+ \mathbf{M}_h$ is obviously symmetric and positive definite, 
and hence invertible. Therefore, the coupled system \eqref{coupled LGS}
also has a unique solution 
$\mathbf{\widetilde{p}} = (\widetilde{p}_k)_{k=1,\ldots,N_h}$,
$\mathbf{{u}} = (u_k)_{k=1,\ldots,N_h} \in \mathbb{R}^{N_h}$
which delivers the nodal parameters for the unique solution
$\widetilde{p}_{\varrho h} = \sum_{k=1}^{N_h}\widetilde{p}_k \varphi_k$ 
and $u_{\varrho h} = \sum_{k=1}^{N_h} u_k \varphi_k$ of the mixed finite
element scheme \eqref{FEM primal} and \eqref{FEM adjoint} as we have
already mentioned in Section~\ref{sec:FiniteElementErrorEstimates}.
Let $0 < \lambda_{\text{min}}(\mathbf{M}_h^{-1}\mathbf{K}_h) =
\lambda_1 \le \ldots \le \lambda_{N_h} =
\lambda_{\text{max}}(\mathbf{M}_h^{-1}\mathbf{K}_h)$ be the eigenvalues,
and $\mathbf{e}_1,\ldots,\mathbf{e}_{N_h} \in \mathbb{R}^{N_h}$ 
the corresponding eigenvectors of the generalized eigenvalue problem
\begin{equation}
\label{eqn:EVP:KM}
 \mathbf{K}_h \mathbf{e} = \lambda \, \mathbf{M}_h \mathbf{e},
\end{equation}
and let us suppose that the eigenvectors
$\mathbf{e}_1,\ldots,\mathbf{e}_{N_h}$ 
are orthonormal with respect to the $\mathbf{M}_h$-energy inner product 
$(\mathbf{M}_h \cdot,\cdot)$, i.e.,
$(\mathbf{M}_h \mathbf{e}_i,\mathbf{e}_j) = \delta_{i,j}$ for all
$i,j=1,\ldots,N_h$.
It is well known, see also \eqref{eqn:SpectralEquivalenceInequalitiesK}
and \eqref{eqn:SpectralEquivalenceInequalitiesM}, that there exists
positive constants $\underline{c}_\text{MK}$ and $\overline{c}_\text{MK}$,
which are independent of $h$, such that
\begin{equation}
\label{eqn:EV:KM}
\underline{c}_\text{MK} \le \lambda_{\text{min}}(\mathbf{M}_h^{-1}\mathbf{K}_h)
\quad \text{and} \quad
\lambda_{\text{max}}(\mathbf{M}_h^{-1}\mathbf{K}_h) \le \overline{c}_\text{MK} h^{-2}.
\end{equation}
These bounds are sharp with respect to $h$.

\begin{lemma}
\label{Lemma:SpectralEquivalenceS2M}
If $\varrho = h^4$, then the Schur complement $\mathbf{S}_h$ is spectrally
equivalent to the mass matrix $\mathbf{M}_h$. More precisely, there hold
the spectral equivalence inequalities
\begin{equation}
\label{eqn:SpectralEquivalenceInequalitiesMSM}
  \underline{c}_\text{MS} \, (\mathbf{M}_h\mathbf{v},\mathbf{v}) 
  \le (\mathbf{S}_h\mathbf{v},\mathbf{v}) \le 
  \overline{c}_\text{MS} \,(\mathbf{M}_h\mathbf{v},\mathbf{v}), 
  \quad \forall \mathbf{v} \in \mathbb{R}^{N_h},
\end{equation} 
where $\underline{c}_\text{MS} = 1$ and
$\overline{c}_\text{MS} = \overline{c}_\text{MK}^2 +1$,
with $\overline{c}_\text{MK}$ from \eqref{eqn:EV:KM}.
\end{lemma}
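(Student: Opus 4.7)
The plan is to exploit the generalized eigenvalue problem \eqref{eqn:EVP:KM} together with the $\mathbf{M}_h$-orthonormal eigenbasis $\{\mathbf{e}_i\}_{i=1}^{N_h}$ to diagonalize all three matrices $\mathbf{M}_h$, $\mathbf{K}_h$, and $\mathbf{K}_h\mathbf{M}_h^{-1}\mathbf{K}_h$ simultaneously. Writing an arbitrary $\mathbf{v}\in\mathbb{R}^{N_h}$ as $\mathbf{v}=\sum_{i=1}^{N_h} v_i\mathbf{e}_i$, the $\mathbf{M}_h$-orthonormality yields $(\mathbf{M}_h\mathbf{v},\mathbf{v})=\sum_i v_i^2$. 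Applying $\mathbf{M}_h^{-1}\mathbf{K}_h$ twice to $\mathbf{e}_i$ (using $\mathbf{K}_h\mathbf{e}_i=\lambda_i\mathbf{M}_h\mathbf{e}_i$) gives $\mathbf{K}_h\mathbf{M}_h^{-1}\mathbf{K}_h\mathbf{e}_i=\lambda_i^2\mathbf{M}_h\mathbf{e}_i$, so
\[
  (\mathbf{K}_h\mathbf{M}_h^{-1}\mathbf{K}_h\mathbf{v},\mathbf{v}) = \sum_{i=1}^{N_h} \lambda_i^2 v_i^2.
\]

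For the lower bound I would simply observe that $\mathbf{K}_h\mathbf{M}_h^{-1}\mathbf{K}_h$ is symmetric and positive semidefinite (in fact SPD, since $\mathbf{K}_h$ is SPD), so $(\varrho\mathbf{K}_h\mathbf{M}_h^{-1}\mathbf{K}_h\mathbf{v},\mathbf{v})\geq 0$, giving $(\mathbf{S}_h\mathbf{v},\mathbf{v})\geq(\mathbf{M}_h\mathbf{v},\mathbf{v})$, i.e., $\underline{c}_\text{MS}=1$.

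For the upper bound I would insert the eigenvalue upper bound $\lambda_i \leq \overline{c}_\text{MK}\,h^{-2}$ from \eqref{eqn:EV:KM} together with the choice $\varrho=h^4$ into the eigenexpansion, yielding $\varrho\lambda_i^2 \leq h^4 \cdot \overline{c}_\text{MK}^2 h^{-4} = \overline{c}_\text{MK}^2$. Hence
\[
  (\varrho\mathbf{K}_h\mathbf{M}_h^{-1}\mathbf{K}_h\mathbf{v},\mathbf{v}) \leq \overline{c}_\text{MK}^2 \sum_{i=1}^{N_h} v_i^2 = \overline{c}_\text{MK}^2\,(\mathbf{M}_h\mathbf{v},\mathbf{v}),
\]
and adding $(\mathbf{M}_h\mathbf{v},\mathbf{v})$ to both sides produces $(\mathbf{S}_h\mathbf{v},\mathbf{v}) \leq (\overline{c}_\text{MK}^2+1)(\mathbf{M}_h\mathbf{v},\mathbf{v})$, which is exactly $\overline{c}_\text{MS}=\overline{c}_\text{MK}^2+1$.

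Honestly, there is no real obstacle here: the whole argument is algebraic, built entirely on the two cited spectral facts \eqref{eqn:EV:KM} and the balancing identity $\varrho\,h^{-4}=1$. The only point that deserves care is the simultaneous diagonalization step, where one must remember that the eigenvectors are orthonormal with respect to $(\mathbf{M}_h\cdot,\cdot)$ (not the Euclidean inner product), so that $(\mathbf{M}_h\mathbf{v},\mathbf{v})$ and $(\mathbf{K}_h\mathbf{M}_h^{-1}\mathbf{K}_h\mathbf{v},\mathbf{v})$ both collapse to clean sums involving $v_i^2$ weighted by $1$ and $\lambda_i^2$, respectively. Everything else is a one-line estimate.
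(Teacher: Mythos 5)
Your proposal is correct and follows essentially the same route as the paper: expand $\mathbf{v}$ in the $\mathbf{M}_h$-orthonormal eigenbasis of \eqref{eqn:EVP:KM}, obtain $(\mathbf{S}_h\mathbf{v},\mathbf{v})=\sum_i(\varrho\lambda_i^2+1)v_i^2$ and $(\mathbf{M}_h\mathbf{v},\mathbf{v})=\sum_i v_i^2$, then bound below by dropping the nonnegative term and above via $\varrho\lambda_i^2\le h^4\overline{c}_\text{MK}^2h^{-4}=\overline{c}_\text{MK}^2$. The paper phrases the lower bound as $(\varrho\underline{c}_\text{MK}^2+1)\ge 1$ rather than invoking positive semidefiniteness directly, but this is an immaterial difference.
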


\begin{proof}
Let $\mathbf{v} \in \mathbb{R}^{N_h}$ be arbitrary, and let us expand 
$\mathbf{v} = \sum_{i=1}^{N_h} v_i^e \mathbf{e}_i $ in the orthonormal
eigenvector basis, where $v_i^e = (\mathbf{M}_h \mathbf{v},\mathbf{e}_i)$.
Then we can represent $(\mathbf{S}_h\mathbf{v},\mathbf{v})$ in the form
\begin{eqnarray*}
(\mathbf{S}_h\mathbf{v},\mathbf{v})
  &=& \sum_{i,j=1}^{N_h} v_i^e v_j^e (\varrho \,\mathbf{K}_h \mathbf{M}_h^{-1}
      \mathbf{K}_h\mathbf{e}_i + \mathbf{M}_h\mathbf{e}_i,\mathbf{e}_j)\\
  &=& \sum_{i,j=1}^{N_h} v_i^e v_j^e (\varrho \lambda_i^2 + 1)
      (\mathbf{M}_h\mathbf{e}_i,\mathbf{e}_j)
      \, = \, \sum_{i=1}^{N_h} (v_i^e)^2 (\varrho \lambda_i^2 + 1).
\end{eqnarray*} 
Using now the identity  $(\mathbf{M}_h\mathbf{v},\mathbf{v}) =
\sum_{i=1}^{N_h} (v_i^e)^2$, and the bounds \eqref{eqn:EV:KM}, we
immediately get the estimates
\begin{equation*}
 (\mathbf{S}_h\mathbf{v},\mathbf{v}) 
 \ge (\varrho \lambda_\text{min}(\mathbf{M}_h^{-1}\mathbf{K}_h)^2 + 1)
 \sum_{i=1}^{N_h} (v_i^e)^2
 \ge (\varrho \underline{c}_\text{MK}^2 + 1)
 (\mathbf{M}_h\mathbf{v},\mathbf{v})
 \ge (\mathbf{M}_h\mathbf{v},\mathbf{v})
\end{equation*}
and
\begin{equation*}
 (\mathbf{S}_h\mathbf{v},\mathbf{v}) 
 \le (\varrho \lambda_\text{max}(\mathbf{M}_h^{-1}\mathbf{K}_h)^2 + 1)
 \sum_{i=1}^{N_h} (v_i^e)^2
 \le (\varrho \overline{c}_\text{MK}^2 h^{-4}+ 1)
 (\mathbf{M}_h\mathbf{v},\mathbf{v}),
\end{equation*}
from which the spectral equivalence inequalities
\eqref{eqn:SpectralEquivalenceInequalitiesMSM} follow for $\varrho = h^4$.
\end{proof}

Since $\mathbf{S}_h =
\varrho \,\mathbf{K}_h \mathbf{M}_h^{-1} \mathbf{K}_h + \mathbf{M}_h$
is spectrally equivalent to the mass matrix $\mathbf{M}_h$, we can
efficiently solve the symmetric and positive definite Schur complement
system \eqref{eqn:Su=f} by means of the Preconditioned Conjugate
Gradient (PCG) method with the symmetric and positive definite
preconditioner $\mathbf{C}_h=\mathbf{M}_h$. Using the well-known
convergence estimate for the PCG method (see, e.g.,
\cite[Chapter~13]{LLSY:Steinbach:2008a}), and the spectral equivalence
inequalities \eqref{eqn:SpectralEquivalenceInequalitiesMSM} from
Lemma~\ref{Lemma:SpectralEquivalenceS2M}, we arrive at estimates of
the iteration error in the $L^2(\Omega)$-norm for the corresponding
finite element functions:
\begin{eqnarray*}
  \| u_{\varrho h} - u_{\varrho h}^n \|_{L^2(\Omega)} 
  &=& \| \mathbf{u} -  \mathbf{u}^n \|_{\mathbf{M}_h} 
      := (\mathbf{M}_h (\mathbf{u} -  \mathbf{u}^n),
      \mathbf{u} -  \mathbf{u}^n)^{1/2}    \\
  &\le& \underline{c}_{\text{MS}}^{-1/2} (\mathbf{S}_h (\mathbf{u} -
        \mathbf{u}^n),\mathbf{u} -  \mathbf{u}^n)^{1/2}\\
  &=& \underline{c}_{\text{MS}}^{-1/2} \| \mathbf{u} -
      \mathbf{u}^n \|_{\mathbf{S}_h} 
      \le \underline{c}_{\text{MS}}^{-1/2}
      \, 2 \, q^n \, \| \mathbf{u} -  \mathbf{u}^0 \|_{\mathbf{S}_h} \\
  &\le& \left(\frac{\overline{c}_{\text{MS}}}{\underline{c}_{\text{MS}}}
        \right)^{1/2} \, 2 \, q^n \,
        \| \mathbf{u} -  \mathbf{u}^0 \|_{\mathbf{M}_h}\\
  &=& \left(\frac{\overline{c}_{\text{MS}}}{\underline{c}_{\text{MS}}}
      \right)^{1/2} \, 2 \, q^n \,
      \| u_{\varrho h} - u_{\varrho h}^0\|_{L^2(\Omega)}, 
\end{eqnarray*} 
where
\begin{equation*}
  q = \frac{\sqrt{\text{cond}_2(\mathbf{M}_h^{-1}\mathbf{S}_h)}-1}
  {\sqrt{\text{cond}_2(\mathbf{M}_h^{-1}\mathbf{S}_h)}+1},
 \quad \text{with} \; \,
 \text{cond}_2(\mathbf{M}_h^{-1}\mathbf{S}_h) 
 = \frac{\lambda_\text{max}(\mathbf{M}_h^{-1}\mathbf{S}_h)}
 {\lambda_\text{min}(\mathbf{M}_h^{-1}\mathbf{S}_h)}
 \le \frac{ \overline{c}_{\text{MS}}}{ \underline{c}_{\text{MS}}}.
\end{equation*}
Since we can take $\underline{c}_{\text{MS}} = 1 $, we finally arrive at
the $L^2(\Omega)$ iteration error estimate  
\begin{equation}\label{eqn:PCG_IterationError_L_2}
  \| u_{\varrho h} - u_{\varrho h}^n \|_{L^2(\Omega)} 
      \le 2 \, (\overline{c}_{\text{MS}})^{1/2} \, \overline{q}^n \,
      \| u_{\varrho h} - u_{\varrho h}^0\|_{L^2(\Omega)},
\end{equation}
where
\[
  \overline{q} = \frac{\sqrt{\overline{c}_{\text{MS}}} - 1}
  {\sqrt{\overline{c}_{\text{MS}}} + 1} 
  = \frac{\sqrt{\overline{c}_\text{MK}^2 +1} - 1}
  {\sqrt{\overline{c}_\text{MK}^2 +1} + 1} 
  < 1
\]
is independent of $h$, using $\overline{c}_\text{MK}$ from \eqref{eqn:EV:KM}.
Here the finite element functions 
$u_{\varrho h}(x) = \sum_{k=1}^{N_h} u_k \varphi_k(x)$ and
$u_{\varrho h}^n(x) = \sum_{k=1}^{N_h} u_k^n \varphi_k(x)$ 
correspond to the solution
$\mathbf{u} = (u_k)_{k=1,\ldots,N_h} \in \mathbb{R}^{N_h}$ of the Schur
complement system \eqref{eqn:Su=f} and the $n$-th PCG iterate
$\mathbf{u}^n = (u_k^n)_{k=1,\ldots,N_h} \in \mathbb{R}^{N_h}$, respectively.

Now, using the triangle inequality, the $L^2(\Omega)$-norm discretization
error estimate \eqref{Final error L2 Hs}, and the $L^2(\Omega)$-norm
iteration error estimate \eqref{eqn:PCG_IterationError_L_2}, we finally get
\begin{eqnarray*}
  \| \overline{u} - u_{\varrho h}^n \|_{L^2(\Omega)}
  &\le& \| \overline{u} - u_{\varrho h} \|_{L^2(\Omega)} +
        \|  u_{\varrho h} - u_{\varrho h}^n \|_{L^2(\Omega)}\\
  &\le& c \, h^s \, \|\overline{u}\|_{H^s(\Omega)} +
        2 \, (\overline{c}_{\text{MS}})^{1/2} \, \overline{q}^n \,
        \| u_{\varrho h} - u_{\varrho h}^0\|_{L^2(\Omega)}   \\
  &\le& h^s \Big( c \,\|\overline{u}\|_{H^s(\Omega)} 
        + 2 \, (\overline{c}_{\text{MS}})^{1/2} \,
        \| u_{\varrho h} - u_{\varrho h}^0\|_{L^2(\Omega)}  \Big)
\end{eqnarray*}
provided that $\overline{q}^n \le h^s$ that yields
$ n \ge  \ln h^{-s} / \ln \overline{q}^{-1}$.
Since $\overline{q}$ is independent of $h$, the number of PCG iterations
only logarithmically grows with respect to $h$ in order to obtain the total 
error in the order $O(h^s)$ of the discretization error.
This logarithmical growth can be avoided in a nested iteration setting 
on a sequence of grids; see, e.g, \cite{LLSY:Hackbusch:2016a}.

However, each Schur Complement PCG iteration step requires the
solution of two systems of algebraic equations with the symmetric and
positive definite well-conditioned mass matrix $\mathbf{M}_h$ as
system matrix, namely,
\begin{enumerate}
 \item in the matrix-vector multiplication 
   $\mathbf{S}_h \mathbf{u}^n =
   \varrho \,\mathbf{K}_h \mathbf{M}_h^{-1} \mathbf{K}_h \mathbf{u}^n 
   + \mathbf{M}_h \mathbf{u}^n$,
 \item and in the preconditioning step
   $\mathbf{M}_h\mathbf{w}^n = \mathbf{r}^n$. 
\end{enumerate}
Thus, in a preprocessing step, we can factorize $\mathbf{M}_h$, e.g., by
means of the $\mathbf{L}_h \mathbf{D}_h \mathbf{L}_h^T$ or the Cholesky
factorization, and then use fast forward and backward substitutions at
each iteration step. In the preconditioning step, we can avoid the
solution of the preconditioning system
$\mathbf{C}_h\mathbf{w}^n = \mathbf{r}^n$ with $\mathbf{C}_h = \mathbf{M}_h$
by replacing $\mathbf{C}_h$ with a spectrally equivalent preconditioner
such as $\mathbf{C}_h = \text{diag}(\mathbf{M}_h)$, 
$\mathbf{C}_h = \text{lump}(\mathbf{M}_h) $,
$\mathbf{C}_h = \text{area}(\mathbf{M}_h) $,
or even $\mathbf{C}_h = h^d \mathbf{I}_h$;
cf. \eqref{eqn:SpectralEquivalenceInequalitiesM}. Here, 
$\text{diag}(\mathbf{M}_h)$ is the diagonal matrix with the diagonal
entries from $\mathbf{M}_h$, $\text{lump}(\mathbf{M}_h)$ is the lumped mass
matrix that is diagonal; see, e.g., \cite{LLSY:Thomee:1997a},
$\mathbf{C}_h = \text{area}(\mathbf{M}_h) $ denotes the diagonal matrix
with the $k$-th diagonal entry which coincides with the area of the
support of the basis function $\varphi_k$;
see \cite[Lemma~9.7]{LLSY:Steinbach:2008a} that also provides the
spectral equivalence constants, and $\mathbf{I}_h$ is the identity matrix.
If we replace $\mathbf{M}_h^{-1}$ in the Schur complement system
\eqref{eqn:Su=f} by the diagonal matrix $(\text{lump}(\mathbf{M}_h))^{-1}$,
then the solution of the corresponding inexact Schur complement system
\begin{equation}\label{eqn:InexactSchurComplementSystem}
  \Big[ \varrho \,\mathbf{K}_h (\text{lump}(\mathbf{M}_h))^{-1}
  \mathbf{K}_h + \mathbf{M}_h \Big] \widetilde{\mathbf{u}} =
  \mathbf{f} 
\end{equation}
by means of the PCG precondioned by $\text{diag}(\mathbf{M}_h)$,
$\text{lump}(\mathbf{M}_h)$ or $\mathbf{C}_h= \text{area}(\mathbf{M}_h)$, 
is obviously of asymptotically optimal
complexity $\mathcal{O}(N_h \ln \varepsilon^{-1})$ 
for some fixed relative accuracy $\varepsilon \in (0,1)$. 
However, the use of mass lumping in the discretization needs an
additional analysis that goes beyond the scope of this paper.
Instead, in Section~\ref{sec:NumericalResults}, 
we compare the discretization error 
and the efficiency of this inexact Schur comlement PCG, that we call
inexSCPCG, with the discretization error without mass lumping
corresponding to the solution of \eqref{coupled LGS}
and the efficiency of the other solvers proposed below. 

The solution of a mass matrix system in the matrix-vector multiplication
$\mathbf{S}_h \mathbf{u}^n$ can also be avoided by a mixed reformulation 
as a system of double size with a symmetric, but indefinite system matrix
that reads as follows: Find $(\mathbf{u}, \mathbf{\widehat{p}}) \in
\mathbb{R}^{N_h} \times \mathbb{R}^{N_h}$ such that
\begin{equation}
    \label{eqn:SymmetricInfiniteSystem} 
    \begin{bmatrix}
      \mathbf{M}_h & \mathbf{K}_h\\
      \mathbf{K}_h & -\varrho^{-1} \mathbf{M}_h \\
    \end{bmatrix}
    \begin{bmatrix}
      \mathbf{u}\\
      \mathbf{\widehat{p}}
    \end{bmatrix}
    =
    \begin{bmatrix}
      \mathbf{f}\\
      \mathbf{0}
    \end{bmatrix}
\end{equation}
where $\mathbf{\widehat{p}} =-\sqrt{\varrho}\,\mathbf{\widetilde{p}} =
- \mathbf{p}$. The symmetric and indefinite system
\eqref{eqn:SymmetricInfiniteSystem} is obviously equivalent to 
the non-symmetric and positive definite system \eqref{coupled LGS},
and, therefore, to the Schur complement system \eqref{eqn:Su=f}.

The block-diagonal matrix
\begin{equation}
\label{eqn:P_h}
    \mathcal{P}_h =
    \begin{bmatrix}
      \mathbf{M}_h + \varrho^{1/2}\mathbf{K}_h & \mathbf{0} \\
      \mathbf{0} &\varrho^{-1} (\mathbf{M}_h + \varrho^{1/2} \mathbf{K}_h) 
    \end{bmatrix}
\end{equation}
provides a preconditioner for the MINRES solver that is robust with respect
to the mesh size $h$ and the regularization parameter $\varrho$.
This result is proven in \cite{LSTY:Zulehner:2011a}, where also 
the corresponding convergence rate estimates are given.
In order to obtain an efficient, but at the same time robust preconditioner,
we have to replace the block-entry $\mathbf{M}_h + \varrho^{1/2}\mathbf{K}_h$
by a spectrally equivalent preconditioner $\mathbf{C}_h$ such that 
the spectral equivalence constants do not depend neither on $h$ nor on
$\varrho$. Symmetric and positive definite multigrid preconditioners 
based on symmetric $V$- or $W$-cycles are certainly suitable candidates.
We came back to this choice later.

We recall that the optimal choice of the regularization parameter is
$\varrho = h^4$. Using again the eigenvector expansion
$\mathbf{v} = \sum_{i=1}^{N_h} v_i^e \mathbf{e}_i $, we get
\begin{equation}\label{eqn:M+K}
  ((\mathbf{M}_h + \varrho^{1/2}\mathbf{K}_h) \mathbf{v},\mathbf{v}) 
  = \sum_{i=1}^{N_h} (1 + \varrho^{1/2} \lambda_i) (v_i^e)^2 
  = \sum_{i=1}^{N_h} (1 + h^2 \lambda_i) (v_i^e)^2 
\end{equation}
for all $\mathbf{v} \in \mathbb{R}^{N_h}$. This representation and the
eigenvalue estimates \eqref{eqn:EV:KM} immediately yield the spectral
equivalence inequalities
\begin{equation*}
\label{eqn:SpectralEquivalenceInequalitiesMAM1}
  (1 + h^2 \underline{c}_\text{MK}) \, (\mathbf{M}_h\mathbf{v},\mathbf{v}) 
  \le ((\mathbf{M}_h + \varrho^{1/2}\mathbf{K}_h)\mathbf{v},\mathbf{v}) \le 
  (1 + \overline{c}_\text{MK}) \,(\mathbf{M}_h\mathbf{v},\mathbf{v}), 
  \; \forall \mathbf{v} \in \mathbb{R}^{N_h},
\end{equation*} 
which we now rewrite in short form as
\begin{equation}
\label{eqn:SpectralEquivalenceInequalitiesMAM2}
 \underline{c}_\text{MA}\,\mathbf{M}_h
  \le \mathbf{A_h} \le 
  \overline{c}_\text{MA} \,\mathbf{M}_h, 
\end{equation} 
where $\mathbf{A_h} = \mathbf{M}_h + \varrho^{1/2}\mathbf{K}_h
= \mathbf{M}_h + h^2 \mathbf{K}_h$,
$\underline{c}_\text{MA} =  1 + h^2 \underline{c}_\text{MK} \ge 1$, and
$\overline{c}_\text{MA} = 1 + \overline{c}_\text{MK}$.
Replacing now the mass matrix $\mathbf{M}_h$ by the diagonal 
preconditioner $\mathbf{C}_h = \text{diag}(\mathbf{M}_h)$, we 
see that 
\begin{equation}
\label{eqn:SpectralEquivalenceInequalitiesCAC}
  \underline{c}_\text{CA}\,\mathbf{C}_h
  \le \mathbf{A_h} \le 
  \overline{c}_\text{CA} \,\mathbf{C}_h, 
\end{equation} 
with $h$-independent, positive constants $\underline{c}_\text{CA}$ 
and $\overline{c}_\text{CA}$. These spectral equivalence inequalities
easily follow from \eqref{eqn:SpectralEquivalenceInequalitiesMAM2}
and the spectral equivalence of $\mathbf{M}_h$ to
$ \text{diag}(\mathbf{M}_h)$. Therefore, the diagonal preconditioner
\begin{equation}
\label{eqn:P_{diag,h}}
    \mathcal{P}_{\text{diag},h} =
    \begin{bmatrix}
      \mathbf{C}_h &  \mathbf{0}\\
      \mathbf{0}   &  h^{-4} \mathbf{C}_h 
    \end{bmatrix}
    =
    \begin{bmatrix}
      \text{diag}(\mathbf{M}_h) &  \mathbf{0}\\
      \mathbf{0}   &  h^{-4} \text{diag}(\mathbf{M}_h)
    \end{bmatrix}
\end{equation}
is spectrally equivalent to ${\mathcal P}_h$.
Thus, the MINRES preconditioned by $\mathcal{P}_{\text{diag},h}$ 
converges with a rate that is independent of $h$,
and with an asymptotically 
optimal complexity $\mathcal{O}(N_h \ln \varepsilon^{-1})$
for some fixed relative accuracy $\varepsilon \in (0,1)$.
We call this solver 
$\mathcal{P}_{\text{diag}}\mbox{MINRES}$. Numerical tests underpinning
our theoretical results are presented in Section~\ref{sec:NumericalResults}.
If we replace $\mathbf{M}_h + \varrho^{1/2}\mathbf{K}_h$ by 
$\mathbf{C}_h= \text{lump}(\mathbf{M}_h)$, 
$\mathbf{C}_h= \text{area}(\mathbf{M}_h)$, 
or 
$\mathbf{C}_h= \text{diag}(\mathbf{M}_h + h^2 \mathbf{K}_h)$,
we also get diagonal preconditioners with the same properties
as $\text{diag}(\mathbf{M}_h)$.
 
At first glance, it does not seem necessary to replace
$\mathbf{M}_h + \varrho^{1/2}\mathbf{K}_h$ by a symetric and positive
definite multigrid preconditioner 
$\mathbf{C}_h= (\mathbf{M}_h + \varrho^{1/2}\mathbf{K}_h)
(\mathbf{I}_h - \mathbf{MG}_h^k)^{-1}$ as it was proposed in
\cite{LSTY:Zulehner:2011a}  for fixed positive $\varrho$, since, for
$\varrho = h^4$, the block $\mathbf{M}_h + \varrho^{1/2}\mathbf{K}_h$ 
is spectrally equivalent to $\mathbf{M}_h$ and, therefore, well conditioned.
Here, $\mathbf{I}_h$ is the identity, $\mathbf{MG}$ denotes the multigrid
iteration matrix, and $k$ is the number of multigrid iterations per
precondioning step (usually, $k=1$). However, we can improve the spectral
equivalent constants in comparison to
$\mathbf{C}_h = \text{diag}(\mathbf{M}_h)$ for the price of higher
arithmetical, but still asymptotically optimal cost per preconditioning step.
More precisely, for the multigrid preconditioner
$\mathbf{C}_h = \mathbf{A}_h(\mathbf{I}_h - \mathbf{MG}_h^k)^{-1}$,
we get the spectral equivalence inequalities
\eqref{eqn:SpectralEquivalenceInequalitiesCAC} with the spectral constants 
$\underline{c}_\text{CA} = 1 - \eta^k$ and $\overline{c}_\text{CA} = 1 + \eta^k$,
where $\eta$ is an upper bound for the $\mathbf{A}_h$-energy norm of the 
multigrid iteration matrix $\mathbf{MG}_h$ that is nothing but the
multigrid convergence rate in the $\mathbf{A}_h$-energy norm. 
We mention that $\overline{c}_\text{CA}$ is even equal to $1$ if $k$ is
even or if the multigrid iteration matrix $\mathbf{MG}_h$ is non-negative
with respect to the $\mathbf{A}_h$-energy inner product. We refer the reader
to \cite{LLSY:JungLanger:1991a, LLST:JungLangerMeyerQueckSchneider:1989a}
for details on multigrid preconditioners. Therefore, the multigrid
preconditioner
\begin{equation}
\label{eqn:P_{mg,h}}
    \mathcal{P}_{\text{mg},h} =
    \begin{bmatrix}
      \mathbf{C}_h &  \mathbf{0}\\
      \mathbf{0}   &  h^{-4} \mathbf{C}_h 
    \end{bmatrix}
    =
    \begin{bmatrix}
      \mathbf{A}_h(\mathbf{I}_h - \mathbf{MG}_h^k)^{-1} &  \mathbf{0}\\
      \mathbf{0}   &  h^{-4} \mathbf{A}_h(\mathbf{I}_h - \mathbf{MG}_h^k)^{-1}
    \end{bmatrix}
\end{equation}
is spectrally equivalent to ${\mathcal P}_h$. Thus, the MINRES preconditioned
by $\mathcal{P}_{\text{mg},h}$ also converges with a rate that is independent of $h$, 
and with asymptotically optimal complexity $\mathcal{O}(N_h \ln \varepsilon^{-1})$.
We call this solver ${\mathcal{P}_{\text{mg}}\mbox{MINRES}}$.
Numerical tests illustrating the behavior of this multigrid preconditioner 
and comparing it with the diagonal preconditioner $\mathcal{P}_{\text{diag},h}$
are reported in Section~\ref{sec:NumericalResults}.

Instead of the MINRES, we can also use Bramble-Pasciak PCG ({BP-PCG}) as
solver; see \cite{LLSY:BramblePasciak:1988a}. In order to apply BP-PCG, we
reformulate the coupled system (\ref{coupled LGS}) in the equivalent form
\begin{equation}
    \label{eqn:SymmetricInfiniteSystem2BP} 
    \begin{bmatrix}
      \mathbf{M}_h & \sqrt{\varrho}\, \mathbf{K}_h\\
      \sqrt{\varrho}\,\mathbf{K}_h & - \mathbf{M}_h \\
    \end{bmatrix}
    \begin{bmatrix}
      \mathbf{\widetilde{p}}\\
      \mathbf{u}
    \end{bmatrix}
    =
    \begin{bmatrix}
      \mathbf{0}\\
      \mathbf{-f}
    \end{bmatrix}
\end{equation}
Applying the Bramble-Pasciak transformation 
\begin{equation*}
    \mathcal{T}_h=
    \begin{bmatrix}
      \mathbf{M}_h \mathbf{C}_{M_h}^{-1}-\mathbf{I}                 &  \mathbf{0} \\
      \sqrt{\varrho}\, \mathbf{K}_h \mathbf{C}_{M_h}^{-1}     &  - \mathbf{I}\\
    \end{bmatrix}
    =
    \begin{bmatrix}
      (\mathbf{M}_h - \mathbf{C}_{M_h})   \mathbf{C}_{M_h}^{-1}\mathbf{I}                 &  \mathbf{0} \\
      \sqrt{\varrho}\, \mathbf{K}_h \mathbf{C}_{M_h}^{-1}     &  - \mathbf{I}\\
    \end{bmatrix}
\end{equation*}
to the symmetric, but indefinite system \eqref{eqn:SymmetricInfiniteSystem2BP},
we arrive at the symetric and positive definite system 
\begin{equation}
\label{eqn:spdBPsystem}
  \mathcal{K}_h 
      \begin{bmatrix}
      \mathbf{\widetilde{p}}\\
      \mathbf{u}
      \end{bmatrix} 
      = 
      \begin{bmatrix}
      \mathbf{0}\\
      \mathbf{f}
     \end{bmatrix}
      \equiv
      \mathcal{T}_h
      \begin{bmatrix}
      \mathbf{0}\\
      \mathbf{-f}
     \end{bmatrix},
\end{equation}
with the system matrix
\begin{equation*}
    \begin{aligned}
    {\mathcal K}_h&=
    \begin{bmatrix}
     \mathbf{M}_h \mathbf{C}_{M_h}^{-1}-\mathbf{I}                 &  \mathbf{0} \\
      \sqrt{\varrho}\, \mathbf{K}_h \mathbf{C}_{M_h}^{-1}     &  - \mathbf{I}\\
    \end{bmatrix}
    \begin{bmatrix}
     \mathbf{M}_h & \sqrt{\varrho}\, \mathbf{K}_h\\
      \sqrt{\varrho}\,\mathbf{K}_h & - \mathbf{M}_h \\
    \end{bmatrix}\\
    &=
    \begin{bmatrix}
      \mathbf{M}_h \mathbf{C}_{M_h}^{-1}\mathbf{M}_h-\mathbf{M}_h 
                                        & \sqrt{\varrho}(\mathbf{M}_h\mathbf{C}_{M_h}^{-1}-\mathbf{I})\mathbf{K}_h \\
      \sqrt{\varrho} \mathbf{K}_h(\mathbf{C}_{M_h}^{-1}\mathbf{M}_h-\mathbf{I}) 
                                        & \varrho \mathbf{K}_h \mathbf{C}_{M_h}^{-1} \mathbf{K}_h + \mathbf{M}_h\\
    \end{bmatrix}\\
    &=
    \begin{bmatrix}
      (\mathbf{M}_h - \mathbf{C}_{M_h})\mathbf{C}_{M_h}^{-1}\mathbf{M}_h
                                        & \sqrt{\varrho}(\mathbf{M}_h-\mathbf{C}_{M_h})\mathbf{C}_{M_h}^{-1}\mathbf{K}_h \\
      \sqrt{\varrho} \mathbf{K}_h(\mathbf{C}_{M_h}^{-1}\mathbf{M}_h-\mathbf{I}) 
                                        & \varrho \mathbf{K}_h \mathbf{C}_{M_h}^{-1} \mathbf{K}_h + \mathbf{M}_h\\
    \end{bmatrix},
    \end{aligned}
\end{equation*}
where $\mathbf{C}_{M_h}$ is symmetric and positive definite, spectrally
equivalent to the mass matrix $\mathbf{M}_h$, and 
\begin{equation}
\label{eqn:C<M}
 \mathbf{C}_{M_h} < \mathbf{M}_h.
\end{equation}
Therefore, we can choose 
\begin{equation}
\label{eqn:C=0.25M}
\mathbf{C}_{M_h} = 0.25 \, \text{diag}(\mathbf{M}_h)
\end{equation}
that is spectrally equivalent to $\mathbf{M}_h$,
and that ensures \eqref{eqn:C<M}; see, e.g., \cite{Elman2005}. 
More precisely, there are spectral equivalence constants
$1 < \underline{c}_\text{CM} \le \overline{c}_\text{CM}$ such that
\[
  \mathbf{C}_{M_h}   < \underline{c}_\text{CM} \mathbf{C}_{M_h} \le
  \mathbf{M}_h \le \overline{c}_\text{CM}\mathbf{C}_{M_h}
\]
Then the exact BP preconditioner
\begin{equation}
\label{eqn:exactBPpreconditioner}
    {\mathcal P}_{BP,h} =
    \begin{bmatrix}
      \mathbf{M}_h - \mathbf{C}_{M_h} & \mathbf{0} \\
      \mathbf{0} &\varrho \mathbf{K}_h \mathbf{M}_{h}^{-1} \mathbf{K}_h + \mathbf{M}_h 
    \end{bmatrix}
\end{equation}
is spectrally equivalent to ${\mathcal K}_h$. More precisely, the spectral
equivalence inequalities
\begin{equation}
\label{eqn:SpectralEquivalenceInequalitiesBPexact}
\underline{c}_\text{PK} \le {\mathcal P}_{BP,h} \le{\mathcal K}_h \le \overline{c}_\text{PK}{\mathcal P}_{BP,h}
\end{equation}
hold with the spectral equivalence constants
\begin{equation}
\label{eqn:SpectralEquivalenceConstantsBPexact}
\underline{c}_\text{PK} = \frac{1 - \sqrt{\alpha}}{1 - \alpha} 
\quad \mbox{and} \quad 
\overline{c}_\text{PK} = \frac{1 + \sqrt{\alpha}}{1 - \alpha}, 
\end{equation}
where $\alpha = 1 - (1/\underline{c}_\text{CM})$. The lower constant
$\underline{c}_\text{PK}$ was derived in \cite{LLSY:BramblePasciak:1988a},
wheras the upper constant can be found in \cite{LLSY:Zulehner:2001a}. 
Now, replacing the Schur complement
$ \mathbf{S}_h = \varrho \mathbf{K}_h \mathbf{M}_{h}^{-1} \mathbf{K}_h +
\mathbf{M}_h$ in the exact BP preconditioner
\eqref{eqn:exactBPpreconditioner} by $\text{diag}(\mathbf{M}_h)$
that is spectrally equivalent to $ \mathbf{S}_h$, 
we arrive at the inexact BP preconditioner
\begin{equation*}
    \widetilde{\mathcal P}_{BP,h} =
    \begin{bmatrix}
      \mathbf{M}_h - \mathbf{C}_{M_h} & \mathbf{0} \\
      \mathbf{0} & \text{diag}(\mathbf{M}_h)
    \end{bmatrix}
    =
    \begin{bmatrix}
      \mathbf{M}_h - 0.25 \, \text{diag}(\mathbf{M}_h) & \mathbf{0} \\
      \mathbf{0} & \text{diag}(\mathbf{M}_h)
    \end{bmatrix}
\end{equation*}
that is spectrally equivalent to ${\mathcal K}_h$ as well.
Thus, the BP-PCG, that is here nothing but the PCG preconditioned by
$\widetilde{\mathcal P}_{BP,h}$  
applied to the symmetric and positive definite system
\eqref{eqn:SymmetricInfiniteSystem2BP},
converges with a $h$-independent rate in asymptotically 
optimal complexity $\mathcal{O}(N_h \ln \varepsilon^{-1})$. 
In the next section, we numerically compare exactly this BP-PCG
with ${\mathcal{P}_{\text{diag}}\mbox{MINRES}}$
and ${\mathcal{P}_{\text{mg}}\mbox{MINRES}}$ 
as well as with the inexact Schur complement PCG inexactSCPCG
where we use mass lumping in the discretization in order to make the
multiplications with the Schur complement effficient.

%
%

\section{Numerical results}
\label{sec:NumericalResults}
In our numerical examples, we 
consider
the computational domain $\Omega=(0,1)^3$,
that is decomposed into uniformly refined tetrahedral elements. The starting
mesh contains $384$ tetrahedral elements and $125$ vertices, i.e., $5$
vertices in each direction, which leads to an initial mesh size
$h=2^{-2}$. The tests are performed on $8$ uniformly refined mesh levels
$L_i$, $i=1,...,8$. The number of vertices, the mesh size $h$,
and the corresponding regularization parameter $\varrho=h^4$ are
given in Table \ref{tab:hrho}. 
\begin{table}[h]
\begin{tabular}{|l|rlr|}
  \hline
  Level&Number of vertices&$h$&$\varrho\; (=h^4)$\\
  \hline
  $L_1$&$125$&$2^{-2}$&$2^{-8}$\\
  $L_2$&$729$&$2^{-3}$&$2^{-12}$\\
  $L_3$&$4,913$&$2^{-4}$&$2^{-16}$\\
  $L_4$&$35,937$&$2^{-5}$&$2^{-20}$\\
  $L_5$&$274,625$&$2^{-6}$&$2^{-24}$\\
  $L_6$&$2,146,689$&$2^{-7}$&$2^{-28}$\\
  $L_7$&$16,974,593$&$2^{-8}$&$2^{-32}$\\
  $L_8$&$135,005,697$&$2^{-9}$&$2^{-36}$\\
  \hline
\end{tabular}
\caption{The number of vertices, the mesh size $h$, and the
related regularization parameter $\varrho=h^4$ on $8$ uniformly refined
mesh levels.}
\label{tab:hrho}
\end{table}

To confirm the convergence rate as given in \eqref{Final error L2 H2}
of the finite element solution $u_{\varrho h}$ to a given
target $\overline{u}$, we have considered the following four representative
targets with different regularities, similar to
\cite{LLSY:NeumuellerSteinbach:2021a}:

\medskip

\noindent
{\bf Target 1:} A smooth target $\overline{u}=\sin(\pi x)\sin(\pi y)\sin( \pi
z)$, $\overline{u}\in H_0^1(\Omega)\cap H^2(\Omega)$;

\medskip

\noindent
{\bf Target 2:} A piecewise linear continuous target $\overline{u}$ being
one in the mid point $(\frac{1}{2},\frac{1}{2},\frac{1}{2})$ and zero in all
corner points of $\Omega$, $\overline{u}\in H_0^1(\Omega)\cap
H^s(\Omega)$, $ s < \frac{3}{2}$;

\medskip

\noindent
{\bf Target 3:} A piecewise constant discontinuous function $\overline{u}$
being one in the inscribed cube $(\frac{1}{4}, \frac{3}{4})^3$ and zero
elsewhere, $\overline{u}\in H^s(\Omega)$, $s < \frac{1}{2}$;

\medskip

\noindent
{\bf Target 4:} A smooth target
$\overline{u}=1+\sin(\pi x_1)\sin(\pi x_2)\sin(\pi x_3)$
that violates the homogeneous Dirichlet boundary conditions,
$\overline{u}\in H^s(\Omega)$, $s<\frac{1}{2}$. 

\medskip

We will further report robustness and computational cost of four preconditioned
Krylov subspace solvers for the large scale linear system of algebraic
equations that are arising from the finite element discretization of the
optimality system with the choice of the regularization
parameter $\varrho=h^4$. More precisely, we study the numerical performance
of the following four Krylov subspace solvers
described in Section~\ref{sec:RobustSolvers}:

\begin{enumerate}
\item ${\mathcal{P}_{\text{mg}}\mbox{MINRES}}$: multigrid-preconditioned
  MINRES for solving \eqref{eqn:SymmetricInfiniteSystem},
\item ${\mathcal{P}_{\text{diag}}\mbox{MINRES}}$: diagonal-preconditioned
  MINRES for solving \eqref{eqn:SymmetricInfiniteSystem},
\item BP-PCG: Bramble-Pasciak PCG for solving
  \eqref{eqn:SymmetricInfiniteSystem2BP},
\item inexSCPCG: inexact Schur complement PCG solving
  \eqref{eqn:InexactSchurComplementSystem}.
\end{enumerate}
The multigrid preconditioner $\eqref{eqn:P_{mg,h}}$ with $k=1$, 
which is applied  in ${\mathcal{P}_{\text{mg}}\mbox{MINRES}}$,
is based on a W-cycle that
starts with a zero initial guess, uses 2 forward Gauss--Seidel presmoothing 
and 2 backward Gauss--Seidel postsmoothing steps, and canonical
transfer operators such that the multigrid preconditioner is
symmetric and positive definite;
see \cite{LLSY:JungLanger:1991a, LLST:JungLangerMeyerQueckSchneider:1989a}.
We note that the first three solvers solve mixed systems with $2 N_h$
degrees of freedoms, whereas the last method solves the inexact Schur
complement system that only has $N_h$ degrees of freedom.
We recall that the systems \eqref{coupled LGS},
\eqref{eqn:SymmetricInfiniteSystem},
\eqref{eqn:SymmetricInfiniteSystem2BP} and \eqref{eqn:Su=f} are equivalent.
But since we use the inexact Schur complement instead of the exact Schur
  complement in \eqref{eqn:InexactSchurComplementSystem}, we compute a
  perturbed solution $\widetilde{u}_{\varrho h}$ with some additional
  error. Hence we also compare all discretization errors.
In all of these approaches, the solvers stop the iterations as soon as the
preconditioned residual is reduced by a factor $10^{11}$.
Since the residual of the inexact Schur complement system
  \eqref{eqn:InexactSchurComplementSystem} is computed for the primal
  unknown $\mathbf{u}$ only, the resulting $L^2$ error of
  $\widetilde{u}_{\varrho h}$ is different compared to the system
  \eqref{coupled LGS}, where also the residual of the adjoint
  $\mathbf{p}$ is involved.
We finally mention that the preconditioned residual norm here
reproduces the $L^2$ norm 
  in which we are primarily interested.


\subsection{Convergence studies}
The errors $\|u_{\varrho h}-\overline{u}\|_{L^2(\Omega)}$ between the finite
element solution $u_{\varrho h}$ and the given target $\overline{u}$ are
computed by means of the first three methods that solve the equivalent mixed 
formulations \eqref{eqn:SymmetricInfiniteSystem} or
\eqref{eqn:SymmetricInfiniteSystem2BP}, whereas the errors
$\|\widetilde{u}_{\varrho h}-\overline{u}\|_{L^2(\Omega)}$ are computed 
by solving the inexact Schur complement equation
\eqref{eqn:InexactSchurComplementSystem} using inexSCPCG.
These errors are given in Tables \ref{tab:eocexample1}--\ref{tab:eocexample4}
for Targets 1--4, respectively. For all of these cases, we have used the
required scaling $\varrho=h^4$, which leads to optimal convergence with
respect to the mesh size $h$, depending on the corresponding regularity of
the given target $\overline{u}$. This is observed as the experimental order
of convergence (eoc) in Tables \ref{tab:eocexample1}--\ref{tab:eocexample4}.
Further, the solution $\widetilde{u}_{\varrho h}$ from solving the inexact
Schur complement equation (Approach 4) does not deteriorate with
respect to the accuracy and convergence rate. This is confirmed by comparison
with the solution from the exact Schur complement equation that is equivalent
to solving the mixed fromulations \eqref{eqn:SymmetricInfiniteSystem}
or \eqref{eqn:SymmetricInfiniteSystem2BP} 
as we did in the first three approaches.
  
  \begin{table}[h]
    \begin{tabular}{|l|lr|lr|}
      \hline
      \multirow{2}{*}{Level}&\multicolumn{2}{c|}{Approaches 1 - 3} &
      \multicolumn{2}{c|}{Approach 4} \\  \cline{2-5}
      &$\|u_{\varrho h}-\overline{u}\|_{L^2(\Omega)}$ &eoc& $\|\widetilde{u}_{\varrho h}-\overline{u}\|_{L^2(\Omega)}$ & eoc\\
      \hline
      $L_1$ &$3.04904$e$-1$& $-$& $3.03162$e$-1$& $-$\\
      $L_2$ &$7.14457$e$-2$& $2.09$&$6.92534$e$-2$& $2.13$\\
      $L_3$ &$5.35113$e$-3$& $3.74$&$5.29228$e$-3$& $3.71$\\
      $L_4$ &$6.22449$e$-4$& $3.10$&$6.19849$e$-4$& $3.09$\\
      $L_5$ &$1.34331$e$-4$& $2.21$&$1.33758$e$-4$& $2.21$\\
      $L_6$ &$3.27079$e$-5$& $2.03$&$3.25740$e$-5$& $2.04$\\
      $L_7$ &$8.07438$e$-6$& $2.02$&$8.04282$e$-6$& $2.02$\\
      $L_8$ &$2.00173$e$-6$& $2.01$&$1.99422$e$-6$& $2.01$\\ \hline
      Theory: & \multicolumn{2}{r|}{$2$} & \multicolumn{2}{r|}{$2$} \\
      \hline
    \end{tabular}
    \caption{Comparison of error $\|u_{\varrho
        h}-\overline{u}\|_{L^2(\Omega)}$ (Approaches 1 - 3) and $\|\widetilde{u}_{\varrho
        h}-\overline{u}\|_{L^2(\Omega)}$ (Approach 4) for 
      {\bf Target 1}.
    }
    \label{tab:eocexample1}
  \end{table}

  \begin{table}[h]
    \begin{tabular}{|l|lr|lr|}
      \hline
      \multirow{2}{*}{Level}&\multicolumn{2}{c|}{Approaches 1 - 3} &
      \multicolumn{2}{c|}{Approach 4} \\  \cline{2-5}
      &$\|u_{\varrho h}-\overline{u}\|_{L^2(\Omega)}$ &eoc& $\|\widetilde{u}_{\varrho h}-\overline{u}\|_{L^2(\Omega)}$ & eoc\\
      \hline
      $L_1$ &$2.72445$e$-1$ & $-$&$2.71300$e$-1$ & $-$\\
      $L_2$&$8.50409$e$-2$&$1.68$&$8.41925$e$-2$& $1.69$\\
      $L_3$&$2.99226$e$-2$& $1.51$&$2.90354$e$-2$& $1.54$\\
      $L_4$&$1.04906$e$-2$& $1.51$&$1.00864$e$-2$& $1.53$\\
      $L_5$&$3.70527$e$-3$& $1.50$&$3.54103$e$-3$& $1.51$\\
      $L_6$&$1.30970$e$-3$ & $1.50$&$1.24752$e$-3$&$1.51$\\
      $L_7$&$4.63061$e$-4$& $1.50$&$4.40293$e$-4$& $1.50$\\
      $L_8$&$1.63735$e$-4$& $1.50$&$1.55529$e$-4$& $1.50$\\ \hline
      Theory: & \multicolumn{2}{r|}{$1.5$} & \multicolumn{2}{r|}{$1.5$} \\
      \hline
    \end{tabular}
    \caption{Comparison of error $\|u_{\varrho
        h}-\overline{u}\|_{L^2(\Omega)}$ (Approaches 1 - 3) and $\|\widetilde{u}_{\varrho
        h}-\overline{u}\|_{L^2(\Omega)}$ (Approach 4) for 
      {\bf Target 2}.
    }\label{tab:eocexample2}
  \end{table}

  \begin{table}[h]
  \begin{tabular}{|l|lr|lr|}
    \hline
      \multirow{2}{*}{Level}&\multicolumn{2}{c|}{Approaches 1 - 3} &
      \multicolumn{2}{c|}{Approach 4} \\  \cline{2-5}
      &$\|u_{\varrho h}-\overline{u}\|_{L^2(\Omega)}$ &eoc& $\|\widetilde{u}_{\varrho h}-\overline{u}\|_{L^2(\Omega)}$ & eoc\\
      \hline
      $L_1$ &$3.28255$e$-1$ & $-$&$3.26425$e$-1$& $-$\\
      $L_2$ &$2.30561$e$-1$& $0.51$&$2.25595$e$-1$& $0.53$\\
      $L_3$ & $1.63827$e$-1$& $0.49$&$1.59922$e$-1$&$0.50$\\
      $L_4$ &$1.15682$e$-1$& $0.50$&$1.12852$e$-1$& $0.50$\\
      $L_5$&$8.16986$e$-2$& $0.50$&$7.96806$e$-2$& $0.50$\\
      $L_6$&$5.77276$e$-2$ & $0.50$&$5.62946$e$-2$ & $0.50$\\
      $L_7$&$4.08035$e$-2$& $0.50$&$3.97882$e$-2$& $0.50$\\
      $L_8$&$2.88466$e$-2$& $0.50$&$2.81281$e$-2$& $0.50$\\ \hline
      Theory: & \multicolumn{2}{r|}{$0.5$} & \multicolumn{2}{r|}{$0.5$} \\
      \hline
    \end{tabular}
    \caption{Comparison of error $\|u_{\varrho
        h}-\overline{u}\|_{L^2(\Omega)}$ (Approaches 1 - 3) and $\|\widetilde{u}_{\varrho
        h}-\overline{u}\|_{L^2(\Omega)}$ (Approach 4) for 
      {\bf Target 3}.
    }\label{tab:eocexample3}
  \end{table}

  \begin{table}[h]
  \begin{tabular}{|l|lr|lr|}
    \hline
      \multirow{2}{*}{Level}&\multicolumn{2}{c|}{Approaches 1 - 3} &
      \multicolumn{2}{c|}{Approach 4} \\  \cline{2-5}
      &$\|u_{\varrho h}-\overline{u}\|_{L^2(\Omega)}$ &eoc& $\|\widetilde{u}_{\varrho h}-\overline{u}\|_{L^2(\Omega)}$ & eoc\\
      \hline
      $L_1$ & $1.15861$e$-0$& $-$&$1.15659$e$-0$&$-$\\
      $L_2$&$6.72524$e$-1$&$0.78$&$6.73325$e$-1$&$0.78$\\
      $L_3$&$4.63819$e$-1$&$0.54$&$4.62241$e$-1$&$0.54$\\
      $L_4$&$3.27310$e$-1$&$0.50$&$3.25524$e$-1$&$0.51$\\
      $L_5$&$2.31129$e$-1$&$0.50$&$2.29647$e$-1$&$0.50$\\
      $L_6$&$1.63305$e$-1$&$0.50$&$1.62176$e$-1$&$0.50$\\
      $L_7$&$1.15426$e$-1$& $0.50$&$1.14599$e$-1$& $0.50$\\
      $L_8$&$8.16011$e$-2$& $0.50$&$8.10057$e$-2$& $0.50$\\ \hline
      Theory: & \multicolumn{2}{r|}{$0.5$} & \multicolumn{2}{r|}{$0.5$} \\
      \hline
    \end{tabular}
    \caption{Comparison of error $\|u_{\varrho h}-\overline{u}\|_{L^2(\Omega)}$ (Approaches 1 - 3) 
    and $\|\widetilde{u}_{\varrho
      h}-\overline{u}\|_{L^2(\Omega)}$ (Approach 4) for 
    {\bf Target 4}.
    }\label{tab:eocexample4}
  \end{table}
  
  
\subsection{Solver performance}
We recall that all solvers stop the iterations as soon as the preconditioned
residual is reduced by a factor $10^{11}$.
A comparison of the number of iterations (\#Its)
and the required solving time in seconds (s) using these four solvers
are provided in Tables \ref{tab:costexample1}--\ref{tab:costexample4} for the
given targets {\bf Target 1--4}, respectively. 
${\mathcal{P}_{\text{mg}}\mbox{MINRES}}$ requires the fewest iteration numbers
among all these solvers. The solver inexSCPCG outperforms the other three 
solvers regarding the solving time. This is mainly due to the fact that
the inexact Schur complement equation only needs half of degrees of freedom in
comparison with the mixed formulations. Finally, we observe that all of the
preconditioned Krylov subspace methods show their robustness with respect to
the mesh size $h$, using the particular choice for the regularization
parameter $\varrho=h^4$. Furthermore, solvers 
${\mathcal{P}_{\text{diag}}\mbox{MINRES}}$, BP-PCG, and inexSCPCG
are relatively easy to parallelize due to the fact that each iteration of
these approaches only requires matrix-vector multiplications, and the
preconditioning step only requires a vector scaling operation by simply
using the diagonal of the corresponding matrix as a preconditioner thanks
to the spectral equivalence inequalities. 
  \begin{table}[h]
    \begin{tabular}{|l|rl|rl|rl|rl|}
      \hline
      \multirow{2}{*}{Level}&\multicolumn{2}{c|}{${\mathcal{P}_{\text{mg}}\mbox{MINRES}}$} &
      \multicolumn{2}{c|}{${\mathcal{P}_{\text{diag}}\mbox{MINRES}}$} &
      \multicolumn{2}{c|}{BP-PCG} &
      \multicolumn{2}{c|}{inexSCPCG} \\  \cline{2-9}
      &\#Its&Time (s) &\#Its&Time (s) &\#Its&Time (s) &\#Its&Time (s)   \\
      \hline
      $L_1$ &$19$&$2.9$e$-2$&$21$&$2.9$e$-3$ &$24$&$4.9$e$-2$ &$10$&$6.6$e$-4$  \\
      $L_2$ &$21$&$2.7$e$-1$&$172$&$1.4$e$-1$&$180$&$4.5$e$-1$&$88$&$2.5$e$-2$  \\
      $L_3$ &$20$&$1.8$e$-0$&$234$&$2.3$e$-0$&$254$&$1.0$e$-0$&$126$&$2.6$e$-1$ \\
      $L_4$ &$20$&$1.4$e$+1$&$231$&$3.8$e$+1$&$247$&$1.3$e$+1$&$132$&$3.3$e$-0$ \\
      $L_5$ &$18$&$1.0$e$+2$&$225$&$4.2$e$+2$&$242$&$2.0$e$+2$&$130$&$2.0$e$+1$ \\
      $L_6$ &$18$&$8.1$e$+2$&$220$&$2.1$e$+3$&$235$&$2.6$e$+3$&$128$&$1.8$e$+2$ \\
      $L_7$ &$18$&$7.9$e$+3$&$213$&$2.4$e$+4$&$229$&$9.5$e$+3$&$124$&$4.1$e$+3$ \\
      $L_8$ &$18$&$7.1$e$+4$&$205$&$2.1$e$+5$&$223$&$1.5$e$+5$&$120$&$3.6$e$+4$\\
      \hline
    \end{tabular}
    \caption{Comparison of  the number of iterations (\#Its) and  the solving time in seconds (s) for
      {\bf Target~1}.
    }
    \label{tab:costexample1}
  \end{table}

  \begin{table}[h]
    \begin{tabular}{|l|rl|rl|rl|rl|}
      \hline
      \multirow{2}{*}{Level}&\multicolumn{2}{c|}{${\mathcal{P}_{\text{mg}}\mbox{MINRES}}$} &
      \multicolumn{2}{c|}{${\mathcal{P}_{\text{diag}}\mbox{MINRES}}$} &
      \multicolumn{2}{c|}{BP-PCG} &
      \multicolumn{2}{c|}{inexSCPCG} \\  \cline{2-9}
      &\#Its&Time (s) &\#Its&Time (s) &\#Its&Time (s) &\#Its&Time (s)   \\
      \hline
      $L_1$ &$19$&$2.8$e$-2$&$21$&$2.9$e$-3$ &$24$&$2.5$e$-3$ &$10$&$6.4$e$-4$ \\
      $L_2$ &$23$&$2.9$e$-1$&$185$&$1.5$e$-1$&$184$&$3.1$e$-1$&$94$&$2.7$e$-2$ \\
      $L_3$ &$23$&$2.0$e$-0$&$258$&$1.5$e$-0$&$265$&$3.0$e$-0$&$133$&$2.8$e$-1$\\
      $L_4$ &$23$&$1.6$e$+1$&$256$&$5.2$e$+1$&$275$&$4.1$e$+1$&$138$&$3.5$e$-0$\\
      $L_5$ &$24$&$1.3$e$+2$&$248$&$1.1$e$+2$&$257$&$2.4$e$+2$&$137$&$3.9$e$+1$\\
      $L_6$ &$24$&$1.2$e$+3$&$240$&$1.9$e$+3$&$249$&$2.5$e$+3$&$134$&$6.4$e$+2$\\
      $L_7$ &$22$&$1.0$e$+4$&$230$&$2.9$e$+4$&$241$&$1.3$e$+4$&$129$&$3.3$e$+3$\\
      $L_8$ &$20$&$7.9$e$+4$&$220$&$2.2$e$+5$&$234$&$1.7$e$+5$&$123$&$2.6$e$+4$\\
      \hline
    \end{tabular}
    \caption{Comparison of  the number of iterations (\#Its) and  the  solving time in seconds (s) for
      {\bf Target~2}.
    }
             \label{tab:costexample2}
  \end{table}

   \begin{table}[h]
  \begin{tabular}{|l|rl|rl|rl|rl|}
       \hline
       \multirow{2}{*}{Level}&\multicolumn{2}{c|}{${\mathcal{P}_{\text{mg}}\mbox{MINRES}}$} &
       \multicolumn{2}{c|}{${\mathcal{P}_{\text{diag}}\mbox{MINRES}}$} &
      \multicolumn{2}{c|}{BP-PCG} &
      \multicolumn{2}{c|}{inexSCPCG} \\  \cline{2-9}
      &\#Its&Time (s) &\#Its&Time (s) &\#Its&Time (s) &\#Its&Time (s)  \\
      \hline
      $L_1$ &$21$&$3.2$e$-2$&$21$&$7.4$e$-2$ &$25$ &$2.5$e$-3$&$10$ &$5.9$e$-4$\\
      $L_2$ &$25$&$3.1$e$-1$&$191$&$1.5$e$-1$&$183$&$1.8$e$-1$&$97$ &$2.8$e$-2$\\
      $L_3$ &$25$&$2.2$e$-0$&$268$&$1.6$e$-0$&$272$&$3.4$e$-0$&$136$&$3.0$e$-1$\\
      $L_4$ &$25$&$1.8$e$+1$&$276$&$1.8$e$+1$&$285$&$3.1$e$+1$&$149$&$4.3$e$-0$\\
      $L_5$ &$26$&$1.5$e$+2$&$274$&$1.8$e$+2$&$284$&$1.1$e$+2$&$149$&$5.0$e$+1$\\
      $L_6$ &$26$&$1.5$e$+3$&$276$&$3.7$e$+3$&$279$&$2.6$e$+3$&$149$&$7.0$e$+2$\\
      $L_7$ &$26$&$1.2$e$+4$&$274$&$3.3$e$+4$&$266$&$1.7$e$+4$&$145$&$3.7$e$+3$\\
      $L_8$ &$26$&$1.5$e$+5$&$271$&$2.4$e$+5$&$237$&$1.6$e$+5$&$141$&$4.2$e$+4$\\
      \hline
    \end{tabular}
  \caption{Comparison of the  number of iterations (\#Its) and  the solving time in seconds (s) for
    {\bf Target~3}.
  }
    \label{tab:costexample3}
   \end{table}

   \begin{table}[h]
  \begin{tabular}{|l|rl|rl|rl|rl|}
      \hline
      \multirow{2}{*}{Level}&\multicolumn{2}{c|}{${\mathcal{P}_{\text{mg}}\mbox{MINRES}}$} &
      \multicolumn{2}{c|}{${\mathcal{P}_{\text{diag}}\mbox{MINRES}}$} &
      \multicolumn{2}{c|}{BP-PCG} &
      \multicolumn{2}{c|}{inexSCPCG} \\  \cline{2-9}
      &\#Its&Time (s) &\#Its&Time (s) &\#Its&Time (s) &\#Its&Time (s)   \\
      \hline
      $L_1$ &$21$&$3.5$e$-2$ &$21$ &$2.9$e$-3$ &$24$ & $2.1$e$-3$&$10$&$6.1$e$-4$ \\
      $L_2$ &$23$&$3.2$e$-1$ &$182$&$1.5$e$-1$ &$185$&$9.7$e$-2$ &$96$&$2.8$e$-2$ \\
      $L_3$ &$25$&$2.3$e$-0$ &$264$&$1.6$e$-0$ &$269$&$3.2$e$-0$ &$137$&$2.8$e$-1$ \\
      $L_4$ &$24$&$1.7$e$+1$ &$270$&$1.4$e$+1$ &$268$&$3.4$e$+1$ &$147$&$2.5$e$-0$ \\
      $L_5$ &$26$&$1.7$e$+2$ &$268$&$3.3$e$+2$ &$269$& $1.6$e$+2$&$148$& $2.3$e$+1$ \\
      $L_6$ &$26$&$1.6$e$+3$ &$271$&$4.2$e$+3$ &$267$&$4.2$e$+3$ &$150$&$4.9$e$+2$   \\
      $L_7$ &$26$&$1.3$e$+4$ &$268$&$4.0$e$+4$ &$266$&$2.4$e$+4$ &$149$&$2.8$e$+3$  \\
      $L_8$ &$24$&1.1$$e$+5$ &$265$&$2.7$e$+5$&$263$&$2.4$e$+5$ &$147$&$3.6$e$+4$ \\
      \hline
    \end{tabular}
  \caption{Comparison of the  number of iterations (\#Its) and the solving time in seconds (s) for
    {\bf Target~4}.
  }
      \label{tab:costexample4}
  \end{table}
   
%
%

\section{Conclusions and outlook}
\label{sec:ConclusionsOutlook}
We have derived robust estimates of the derivation of the finite element
approximation $u_{\varrho h}$ of the state $u_\varrho$ from the target
(desired state) $\overline{u}$ in the $L^2(\Omega)$ norm, and robust,
asymptotically optimal solvers for distributed elliptic optimal control
problems with $L^2$ -regularization. Due to the optimal choice
$\varrho = h^4$ of the regularization parameter, Jacobi-like
preconditioners are sufficient to construct MINRES or Bramble-Pasciak CG 
solvers of asymptotically optimal complexity with respect to arithmetical
operations  and memory demand. The parallelization of these iterative
methods is straightforward, and will lead to very scalable implementations
since, in contrast to multigrid preconditioners, diagonal preconditioners
are trivial to parallelize. The numerical results yield that the
multigrid preconditioned MINRES solver is slightly more efficient in a
single processor implementation.

Our numerical experiments  show that the inexact Schur Complement PCG
(inexactSCPCG) seems to be the most promising iterative solver, in
particular, in its parallel version, but also the single processor
implementation is the most efficient one in comparison with the MINRES
and Bramble-Pasciak CG  solvers. The numerical experiments also show that
the use of the inexact Schur complement, where the inverse of the mass
matrix is replaced by the inverse of the lumped mass matrix, does not
affect the accuracy. Here a rigoros numerical analysis is still needed. 
Moreover, the development of a nested iteration framework with an
a posteriori control of the discretization error and its parallel 
implementation is a future research topic.
An adaptive mesh refinement will probably require variable regularization 
functions $\varrho(x)$ adapted to the mesh density function rather than 
a fixed choice as we did in this paper where we have investigated 
uniform mesh refinement.

This approach is not only restricted to the simple model problem
  of the Poisson equation as constraint, extensions to more complicated
  elliptic equations, but also to parabolic, e.g., the heat equation,
  and hyperbolic, e.g., the wave equation, can be done in a similar
  way, and will be reported elsewhere. Moreover, the consideration of
  control constraints requires the efficient solution of a sequence
  of linear algebraic systems as considered in this paper.

%
%

\section*{Acknowledgments}
The authors would like to acknowledge the computing support of the
supercomputer MACH--2\footnote{https://www3.risc.jku.at/projects/mach2/}
from Johannes Kepler Universit\"{a}t Linz and of the high performance
computing cluster Radon1\footnote{https://www.oeaw.ac.at/ricam/hpc}
from Johann Radon Institute for Computational and Applied Mathematics
(RICAM) on which the numerical examples are performed. 


\bibliography{LangerLoescherSteinbachYang}
\bibliographystyle{abbrv}


\end{document}